\theoremstyle{plain}
\newtheorem{thm}{Theorem}
\newtheorem{lem}[thm]{Lemma}
\newtheorem{prop}[thm]{Proposition}
\theoremstyle{plain}
\newtheorem{example}[thm]{Example}
\theoremstyle{break}
\newtheorem{defi}[thm]{Definition}
\theoremstyle{nonumberplain}
\newtheorem{proof}{Proof}
\newcommand{\N}{\mathbb{N}}
\renewcommand{\phi}{\varphi}
\renewcommand{\theta}{\vartheta}
\newcommand{\<}{\langle}
\renewcommand{\>}{\rangle}
\newcommand{\al}{\alpha}
\newcommand{\be}{\beta}
\newcommand{\DD}[1]{\mathbb{D}_{2^{#1}}}
\newcommand{\QFSL}{\mathit{QFSL}}
\renewcommand{\|}{\,|\,}
\newcommand{\StarN}{{}^{*}\mathbb{N}}
\newcommand{\Star}[1]{{}^{*}{#1}}
\newcommand{\restr}{\upharpoonright}
\newcommand{\std}{{}^{\circ}}
\newcommand{\C}{\mathcal{C}}
\newcommand{\Dcal}{\mathcal{D}}
\newcommand{\Bcal}{\mathcal{B}}
\newcommand{\W}{\mathcal{W}}
\renewcommand{\epsilon}{\varepsilon}
\newcommand{\SL}{\mathit{SL}}
\newcommand{\hatDD}[1]{\widehat{\mathbb{D}}_{#1}}
\newcommand{\p}{\mathfrak{p}}
\newcommand{\Iff}{\Leftrightarrow}
\title{Predicate Exchangeability and Language Invariance in Pure Inductive Logic\footnote{Submitted to the Proceedings of the
		1st Reasoning Club Meeting, eds. J.P. van Bendegem, J.Murzi, University
		Foundation, Brussels, 2012, appearing in Logique et Analyse.}}
\author{M.S.Klie{\ss}
	and J.B.Paris\\
	School of Mathematics\\
	The University of Manchester\\
	Manchester M13 9PL\\
\small{malte.kliess@postgrad.manchester.ac.uk},\vspace{-2ex}\\
  \small{jeff.paris@manchester.ac.uk}}
\date{\today}
\begin{document}
\maketitle
\begin{abstract}
	\noindent In Pure Inductive Logic, the rational principle of Predicate Exchangeability states that permuting the predicates
	in a given language $L$ and replacing each occurrence of a predicate in an $L$-sentence $\phi$ according to this
	permutation should not change our belief in the truth of $\phi$. In this paper we study when a probability
	function $w$ on a purely unary language $L$ satisfying Predicate Exchangeability also satisfies the principle of
	Unary Language Invariance.
\end{abstract}

Key words: Predicate Exchangeability, Language Invariance, Inductive Logic, Probability Logic, Uncertain Reasoning.

\section{Introduction}

In the study of logical probability in the sense of Carnap's Inductive Logic programme, \cite{Carnap3}, \cite{Carnap4}, the notion of {\it symmetry} plays a leading role.
In the assignment of beliefs, as subjective probabilities, it seems logical, or rational, to observe prevailing symmetries, a typical example being the perceived fairness
of a coin toss, at least in the absence of any inside knowledge to the contrary. For this reason a number of rational principles have been proposed in Inductive Logic which
are based on invariance under various notions of symmetry, principles which it is argued a choice of logical or rational (we use these two words synonymously) probability
function should satisfy. The most prevailing of these, accepted by both the founding fathers of Inductive Logic, W.E. Johnson \cite{Johnson}, and
Rudolf Carnap \cite{CarnapZ}, is that the names we give things, in particular constants and predicates, should not matter when it comes to assigning probabilities.
Thus, since interchanging which side of the coin we call heads and which we call tails does not change what we understand by a coin toss, both outcomes should
rationally receive the same probability.

A second, ubiquitous, rational principle is that when assigning rational probabilities `irrelevant information' can be disregarded. Indeed the central principle of Johnson
and Carnap, the so called {\it Johnson's Sufficientness Postulate}, is just such an example. Just as with saying what exactly we might mean by a `symmetry' this directive
does of course raise the question of what exactly we mean by an `irrelevance information', and numerous interpretations have been mooted, generally based on the
idea that such information is expressed in a disjoint, or partially disjoint language.

A third, more recent and rather overarching, rational principle is the requirement of {\it language invariance}. By that we mean that to be rational a probability function should
not be restricted to one special language but be extendable to larger languages, and furthermore that those additional rational principles which we imposed in the
context of  the original language should also be satisfied by these extensions.

In this paper we shall study two symmetry principles, {\it Constant Exchangeability}\footnote{Johnson's {\it Permutation Postulate} and Carnap's  {\it Axiom of Symmetry}.}
and {\it Predicate Exchangeability}, in the presence of language invariance with the main goal of providing a representation theorem along the lines of
de Finetti's Representation Theorem for Constant Exchangeability alone, see for example \cite{Finetti}, \cite{Continuum}. Although rather technical, at least in relation
to the seemingly elementary mathematics at the heart of Inductive Logic, such results have, starting with Gaifman \cite{Gaifman2} and Humburg \cite{Humburg}, been
an extremely powerful tool in our understanding of the interrelationship between the various rational principles which have been proposed. Hopefully the results given
here will also find similar applications in the future.

The structure of this paper is as follows. In Section 2 we shall introduce the notation and give precise formulations of the main principles we shall be studying. In Section 3
we shall provide a representation theorem for probability functions satisfying language invariance with Constant and Predicate Exchangeability assuming a particularly
strong irrelevance condition, the {\it Constant Irrelevance Principle}, and in the next section show a similar result without this assumption. This latter representation
theorem shows that all such probability functions are in a sense convex mixtures of probability functions satisfying the so called {\it Weak Irrelevance Principle}, and
conversely. Finally in Section 5 we will give a general representation theorem for probability functions satisfying Constant and Predicate Exchangeability alone, showing
that they are mixtures (not necessarily convex) of such probability functions which additionally satisfy language invariance.

The philosophical standpoint of this paper is  {\it Pure Inductive Logic}, see \cite{Continuum}, \cite{ParisVencovskaBook}, a branch of Carnap's Inductive Logic which
he already described in \cite{CarnapZ}. Thus we shall be interested in studying logical probability without relation to specific interpretations. Of course the rational
principles one proposes may have their genesis in real world examples but once a principle is formulated it is studied in Pure Inductive Logic through the agency
of mathematics. The subsequent interest within philosophy lies, we would opine, mainly in  considering what these mathematical conclusions tell us about the
original and like motivating examples.

\section{Notation and Principles}
We will be working in the usual context of (unary) Pure Inductive Logic. Thus the first order languages we will be concerned with consist only of
finitely many unary predicate symbols $P_i$ and countably many constant symbols\footnote{For convenience, we shall henceforth refer to these just as `predicates' and `constants'.}
$a_1,a_2,\dotsc,a_m,\dotsc$, which should be thought
of as exhausting the universe. We will write $L_q$ to indicate the language containing just the predicates $P_1,\dotsc,P_q$.
Let $\SL$ denote the set of sentences of the language $L$, $\QFSL$ the set of quantifier-free sentences of $L$.

An \emph{atom} $\al(x)$ of $L$ is a formula
\begin{gather*}
	P_1^{\epsilon_1}(x)\wedge P_2^{\epsilon_2}(x)\wedge\dotsb\wedge P_q^{\epsilon_q}(x),
\end{gather*}
with $\epsilon_i\in\{0,1\}$ and $P_i^1(x)$, $P_i^0(x)$ standing for $P_i(x)$, $\neg P_i(x)$, respectively.\footnote{In the literature,
	the notation $\pm P_i(x)$ is more common; however, in the scope of this paper, the notation $P_i^{\epsilon_i}(x)$ is more convenient.} Note that
for $L$ containing $q$ predicates there are $2^q$ atoms, which we shall denote $\al_1, \dotsc, \al_{2^q}$.

A \emph{state description} of $L$ for\footnote{The entries in such lists will be taken to be distinct unless otherwise stated.} $a_{i_1}, \ldots, a_{i_n}$  is a sentence
\begin{gather*}
	\Theta(a_{i_1},\dotsc,a_{i_n}) = \bigwedge_{j=1}^n \al_{h_j}(a_{i_j}),
\end{gather*}
where $h_j\in\{1,\dotsc,2^q\}$ for $j=1,\dotsc,n$.

A \emph{probability function on $L$} is a function $w:\SL \rightarrow [0,1]$ satisfying the following conditions for all $\theta, \phi, \exists x \, \psi(x)\in\SL$:
\begin{itemize}
	\item[(P1)] If $\models\theta$, then $w(\theta) = 1$.
	
	\item[(P2)] If $\theta\models\neg\phi$, then $w(\theta\vee\phi) = w(\theta) + w(\phi)$.
	
	\item[(P3)] $w(\exists x\, \psi(x)) = \lim_{n\rightarrow\infty} w(\bigvee_{j=1}^n \psi(a_j))$.
\end{itemize}

The following theorem will allow us to restrict our studies to quantifier-free sentences.

\begin{thm}[Gaifman, \cite{Gaifman}]\label{Theorem1}
	Let $w:\QFSL \rightarrow [0,1]$ be a function satisfying (P1), (P2) for all $\theta, \phi\in\QFSL$. Then there exists a unique $w':\SL\rightarrow[0,1]$ satisfying
	(P1)-(P3) extending $w$.
\end{thm}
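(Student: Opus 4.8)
The plan is to reduce the whole problem to quantifier-free sentences by invoking the classical quantifier elimination for monadic first-order logic. Since $L$ has only finitely many unary predicates and no equality symbol, the truth of an $L$-sentence in a structure whose universe is exhausted by $a_1,a_2,\dotsc$ depends only on the atom satisfied by each constant and on which of the $2^q$ atoms are realised at all; an Ehrenfeucht--Fra\"iss\'e argument then shows that every $L$-sentence is logically equivalent to a Boolean combination of quantifier-free sentences together with the $2^q$ sentences $\exists x\,\al_i(x)$, $i=1,\dotsc,2^q$. Equivalently, fixing $n$ and a nonempty $S\subseteq\{1,\dotsc,2^q\}$, call $\Phi$ an \emph{extended state description} if $\Phi=\Theta(a_1,\dotsc,a_n)\wedge\bigwedge_{i\in S}\exists x\,\al_i(x)\wedge\bigwedge_{i\notin S}\neg\exists x\,\al_i(x)$ for a state description $\Theta$ all of whose atoms lie in $S$; for each $n$ the $\Phi$'s are pairwise inconsistent, the level-$(n{+}1)$ ones refine the level-$n$ ones, and every sentence is equivalent to a disjunction of level-$n$ $\Phi$'s once $n$ is large enough. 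Thus it suffices to define $w'$ coherently on the $\Phi$'s.

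For \emph{existence}, set $w'(\Phi):=\lim_{m\to\infty}w(\Phi_m)$, where $\Phi_m$ is the quantifier-free sentence obtained from $\Phi$ by replacing each $\exists x\,\al_i(x)$ by $\bigvee_{k\le m}\al_i(a_k)$; here $w(\Phi_m)$ makes sense since $\Phi_m\in\QFSL$. To see the limit exists, expand the conjunction $\bigwedge_{i\in S}\bigvee_{k\le m}\al_i(a_k)$ inside $\Phi_m$ by inclusion--exclusion, so that $w(\Phi_m)$ becomes an alternating sum of terms of the form $w\bigl(\Theta\wedge\bigwedge_{i\in R}\bigwedge_{k\le m}\neg\al_i(a_k)\bigr)$ for various $R\subseteq\{1,\dotsc,2^q\}$; each such term is non-increasing in $m$ (more constants impose more constraints, and $\theta\models\sigma$ forces $w(\theta)\le w(\sigma)$, a standard consequence of (P1), (P2)) and bounded below by $0$, hence convergent, and so is their finite alternating sum. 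Since $w(\Phi_m)\ge 0$ for every $m$ we also get $w'(\Phi)\ge 0$. Passing to the limit in the identity $\sum_\Theta w(\Theta)=1$ (sum over state descriptions on $a_1,\dotsc,a_m$) and in the obvious disjoint-refinement identities for $w$ yields $\sum_{\Phi\text{ at level }n}w'(\Phi)=1$ and coherence between consecutive levels, so $w'$ extends uniquely and additively over these partitions to a function on all of $\SL$. Conditions (P1) and (P2) are then immediate, and (P3) holds because in the Lindenbaum algebra $\bigvee_{j\le n}\psi(a_j)$ increases to $\exists x\,\psi(x)$ and $w'$ is continuous along such increasing chains, which one verifies by pushing the limit through the finite quantifier-free approximations defining $w'$. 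Finally $w'$ agrees with $w$ on $\QFSL$, since there the extended state descriptions restrict to ordinary state descriptions.

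For \emph{uniqueness}, suppose $w'$ satisfies (P1)--(P3) and extends $w$. Applying (P3) to $\psi(x):=\al_i(x)\wedge\zeta$ with $\zeta$ a quantifier-free sentence, and using $\exists x\,(\al_i(x)\wedge\zeta)\equiv(\exists x\,\al_i(x))\wedge\zeta$, gives $w'\bigl((\exists x\,\al_i(x))\wedge\zeta\bigr)=\lim_m w\bigl(\zeta\wedge\bigvee_{k\le m}\al_i(a_k)\bigr)$, which is determined by $w$; iterating over the atoms in a set $S$ shows that $w'\bigl(\bigwedge_{i\in S}\exists x\,\al_i(x)\wedge\zeta\bigr)$ is determined for every quantifier-free $\zeta$. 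Since $w'(\neg\eta\wedge\zeta)=w'(\zeta)-w'(\eta\wedge\zeta)$ by (P2), inclusion--exclusion on the negated conjuncts reduces the computation of $w'(\Phi)$ for an extended state description $\Phi$ to this positive case, so $w'(\Phi)$ is determined. By the normal form of the first paragraph together with (P2), $w'$ is then determined on all of $\SL$, and the extension is unique.

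The main obstacle is the first paragraph: establishing the monadic normal form and the attendant bookkeeping --- that the extended state descriptions genuinely exhaust $\SL$ up to logical equivalence at each level and refine one another coherently --- and then, in the existence half, the convergence of the defining limits, for which the inclusion--exclusion rewriting into monotone sequences is the essential trick; everything else is routine verification of (P1)--(P3). We note that the absence of equality in $L$ is precisely what makes the normal form this clean: only the \emph{set} of realised atoms enters, never their multiplicities.
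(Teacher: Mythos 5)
The paper itself gives no proof of Theorem \ref{Theorem1} --- it is quoted from Gaifman, whose argument works for arbitrary (not merely unary) languages --- so your monadic normal form route is a genuinely different, special-case argument, and for the purely unary languages of this paper that is legitimate. The normal form itself, the convergence of the defining limits $w'(\Phi)=\lim_m w(\Phi_m)$ via inclusion--exclusion and monotonicity, the level-$n$ partition and refinement identities, and the uniqueness half are all correct. The genuine gap is the verification of (P3), which you dismiss as routine (``$w'$ is continuous along such increasing chains, which one verifies by pushing the limit through the finite quantifier-free approximations''). That step is exactly the $\sigma$-additivity content of the theorem, and it amounts to interchanging the limit over $n$ in $\bigvee_{j\le n}\psi(a_j)$ with the limit over $m$ in your defining approximations; nothing you have established licenses that interchange, and monotone convergence of the $w(\Phi_m)$ gives no control over how fast the mass of $\exists x\,\al_i(x)$ is actually witnessed by constants.

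Concretely, writing $\psi(x)$ in normal form as $\bigvee_i(\al_i(x)\wedge\chi_i)$ with the $\chi_i$ sentences, the difference between $w'(\exists x\,\psi(x))$ and $w'(\bigvee_{j\le n}\psi(a_j))$ is bounded by $\sum_i w'\bigl(\exists x\,\al_i(x)\wedge\bigwedge_{j\le n}\neg\al_i(a_j)\bigr)$, so (P3) reduces to showing that each of these terms tends to $0$, i.e.\ (unwinding your definition of $w'$) that $\lim_n\lim_m w\bigl(\bigwedge_{j\le n}\neg\al_i(a_j)\wedge\bigvee_{k=n+1}^{m}\al_i(a_k)\bigr)=0$. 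This needs its own argument, which is missing. It is true and can be supplied in a few lines: the quantifier-free sentences asserting that the least $k$ with $\al_i(a_k)$ lies in the block $(n,m]$ are pairwise inconsistent for disjoint blocks, so by (P1),(P2) the $w$-values of any sequence of consecutive blocks sum to at most $1$; hence if the double limit stayed above some $\epsilon>0$ you could choose consecutive blocks each of $w$-weight at least $\epsilon/2$, a contradiction. With this lemma (and the easy entailment reducing general $\psi$ to the atom case) your existence half closes up; without it, the proof asserts precisely the point at issue. Note also that your statement of where the difficulty lies should be amended accordingly: the monotone convergence of the defining limits is the easy part, while this ``no escaping mass'' step is the real content.
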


Since any quantifier-free sentence of $L$ is logically equivalent to a disjunction of state descriptions, by (P2) and Theorem \ref{Theorem1} a probability
function is determined by its values on the state descriptions. Let
\begin{gather*}
	\vec{x}\in\DD{q} := \left\{\<x_1,\dotsc,x_{2^q}\>\| x_i\geq 0, \sum_{i=1}^{2^q} x_i = 1\right\}.
\end{gather*}
Then we obtain an example of a probability function by defining $w_{\vec{x}}$ on state descriptions via
\begin{gather*}
	w_{\vec{x}}(\Theta(a_{i_1},\dotsc,a_{i_n})) := \prod_{i=1}^{2^q} x_i^{n_i},
\end{gather*}
where $n_i = |\{j\| h_j = i\}|$.

These functions are quite important examples, as they form the building blocks in de Finetti's Representation Theorem. Before
stating this theorem, we need to introduce the Principle of Constant Exchangeability:

{\bfseries The Principle of Constant Exchangeability, Ex}\\
{\itshape A probability function $w$ on $\SL$ satisfies \emph{Constant Exchangeability} if for each\\
$\phi(a_1,\dotsc,a_{n})\in\SL$, and $\sigma$ a permutation of $\N^{+}\, (= \{1,2,3, \ldots\})$,
\begin{gather*}
	w(\phi(a_1,\dotsc,a_n)) = w(\phi(a_{\sigma(1)},\dotsc,a_{\sigma(n)})).
\end{gather*}}

Notice that the $w_{\vec{x}}$ satisfy Ex. Ex is such a well accepted principle in Inductive Logic that we shall henceforth take it as a standing assumption
throughout that all the probability functions we consider satisfy it.

We shall therefore not mention the particular constants whenever they are understood from the context.

\begin{thm}[de Finetti's Representation Theorem]
	Let $L=L_q$ and $w$ be a probability function on $\SL$ satisfying Ex. Then there exists a normalized, $\sigma$-additive measure $\mu$ on the
	Borel subsets of $\DD{q}$ such that
	\begin{gather}
		w\left(\bigwedge_{j=1}^n \al_{h_j}(a_j)\right) = \int_{\DD{q}} w_{\vec{x}}\left(\bigwedge_{j=1}^n \al_{h_j}(a_j)\right)\,d\mu(\vec{x}).\label{eq1.1}
	\end{gather}
	Conversely, given such a measure $\mu$, the function $w$ defined by \eqref{eq1.1} is a probability function on $\SL$ satisfying Ex.
\end{thm}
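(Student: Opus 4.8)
The plan is to prove de Finetti's Representation Theorem in the unary setting by the classical route: first establish that any $\Ex$-invariant probability function on $\SL$ is determined by, and gives rise to, a sequence of symmetric joint probabilities on the atoms, then invoke the Hausdorff moment problem (in several dimensions) to produce the measure $\mu$ on $\DD{q}$.

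\textbf{The forward direction.} Given a measure $\mu$ on the Borel subsets of $\DD{q}$, I would first check that the right-hand side of \eqref{eq1.1} indeed defines a function on state descriptions that extends to a probability function. Since each $w_{\vec{x}}$ satisfies (P1) and (P2) on $\QFSL$ (an easy direct verification from the product formula, using $\sum_i x_i = 1$), and these identities are preserved under integration against a normalized $\sigma$-additive measure, the function $w$ defined by \eqref{eq1.1} satisfies (P1), (P2) on $\QFSL$; by Theorem \ref{Theorem1} it extends uniquely to a probability function on $\SL$. That this extension satisfies $\Ex$ follows because each $w_{\vec{x}}$ does, the value $w_{\vec{x}}(\bigwedge_j \al_{h_j}(a_{i_j})) = \prod_i x_i^{n_i}$ depending only on the multiset $\{h_1,\dots,h_n\}$ and not on the particular constants.

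\textbf{The representation (hard) direction.} Here I would argue as follows. Fix $n$ and consider the $2^q$-indexed vector of nonnegative reals $\bigl(w(\bigwedge_{j=1}^n \al_{h_j}(a_j))\bigr)$ as $(h_1,\dots,h_n)$ ranges over $\{1,\dots,2^q\}^n$; by $\Ex$ this depends only on the counts $(n_1,\dots,n_{2^q})$ with $\sum n_i = n$, so write it $b_{n_1,\dots,n_{2^q}}$. Using (P2) one shows the \emph{consistency} relation $b_{\vec n} = \sum_{i=1}^{2^q} b_{\vec n + e_i}$ (summing out the atom of a fresh constant $a_{n+1}$, with $e_i$ the $i$th unit vector), together with the normalization $b_{0,\dots,0} = 1$ and $0 \le b_{\vec n} \le 1$. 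These are exactly the conditions of the multidimensional Hausdorff moment problem: such an array arises as $b_{\vec n} = \int_{\DD{q}} \prod_{i=1}^{2^q} x_i^{n_i}\, d\mu(\vec x)$ for a unique normalized $\sigma$-additive Borel measure $\mu$ on the simplex $\DD{q}$. Plugging this into the definition of $w_{\vec x}$ on state descriptions yields \eqref{eq1.1} for state descriptions, and then for all of $\QFSL$ by (P2) and for all of $\SL$ by the uniqueness clause of Theorem \ref{Theorem1}.

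\textbf{Main obstacle.} The technical heart is the appeal to the multidimensional Hausdorff moment problem: one must verify that the complete-monotonicity-type conditions encoded by the iterated consistency relations $b_{\vec n} = \sum_i b_{\vec n + e_i}$ (equivalently, that all finite difference operators applied to the array stay nonnegative) are precisely what guarantees existence \emph{and} uniqueness of the representing measure on the simplex. I would either cite the classical moment-problem literature for this or, to keep the paper self-contained, sketch the standard construction: define finitely additive set functions on the dyadic subcubes of $\DD{q}$ via the differences of the $b_{\vec n}$, check nonnegativity and additivity, and extend by Carathéodory; uniqueness follows since polynomials in $x_1,\dots,x_{2^q}$ are dense in $C(\DD{q})$. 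The remaining steps — verifying (P1), (P2) survive integration, and bookkeeping with the counts $n_i$ under permutations — are routine.
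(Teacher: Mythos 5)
The paper itself gives no proof of this theorem: it is quoted as a classical result, with \cite{Finetti}, \cite{Continuum} and \cite{ParisVencovskaBook} cited for the details, so your proposal can only be compared with the standard treatments rather than with an in-paper argument. Your plan is correct, but it takes a genuinely different route from the one usual in this literature and echoed later in the paper. You reduce the hard direction to the multidimensional Hausdorff (Hildebrandt--Schoenberg) moment problem on the simplex: Ex turns the values on state descriptions into an array $b_{\vec{n}}$ depending only on atom counts, (P2) gives the consistency identity $b_{\vec{n}}=\sum_i b_{\vec{n}+e_i}$ (with $e_i$ the $i$th unit vector), and since every iterated difference of the array reduces, by repeated use of this identity, to a sum of $b_{\vec{m}}$'s, nonnegativity of the $b$'s is exactly the required complete monotonicity, whence a representing Borel measure on $\DD{q}$ exists; your easy direction, pushing (P1), (P2) and Ex through the integral and invoking Theorem \ref{Theorem1}, is exactly right. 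The standard proofs in the cited sources --- and the arguments this paper itself uses for its new representation theorems, e.g.\ Theorems \ref{thm1} and \ref{RepThm} --- instead work with frequency/counting approximations over a large (or nonstandard) number of constants and obtain $\mu$ by a weak-compactness or Loeb-measure/standard-part step. Your route buys existence (and even uniqueness, which the statement does not require) from classical moment-problem analysis, at the price of importing or re-proving the multivariate Hausdorff theorem, which you correctly flag as the technical heart; the frequency-approximation route is more self-contained and is of a piece with the nonstandard machinery of Sections 3--5. If you do sketch the Hausdorff step yourself, prefer the Bernstein-type discrete approximations $\sum_{|\vec{m}|=N}\binom{N}{\vec{m}}b_{\vec{m}}\,\delta_{\vec{m}/N}$ followed by a weak limit over assigning masses directly to dyadic cubes, since that is where the real work in your sketch is hiding.
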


It is straightforward to show (see \cite{ParisVencovskaBook}) that these $w_{\vec{x}}$ are characterized as those probability functions which satisfy Ex together with

{\bfseries The Principle of Constant Irrelevance, IP}\\
{\itshape A probability function $w$ on $\SL$ satisfies \emph{Constant Irrelevance} if for $\theta, \phi \in QFSL$\\ with no constants in common,
\begin{gather*}
	w(\theta \wedge \phi) = w(\theta)\cdot w(\phi).
\end{gather*}}

Thus de Finetti's Representation Theorem can be alternately stated as saying that every probability function satisfying Ex is a convex mixture of probability
functions satisfying IP, and conversely.

The principles that are of particular interest to us in this paper are:

{\bfseries The Principle of Predicate Exchangeability, Px}\\
{\itshape A probability function $w$ on $\SL$ satisfies \emph{Predicate Exchangeability} if whenever $\phi\in\SL$ and $\phi'$ is the result of replacing the predicates\footnote{In such lists we shall always assume that the members are distinct.
}
$P_{i_1},\dotsc,P_{i_m}$ in $\phi$ by $P_{k_1},\dotsc,P_{k_m}$, then
\begin{gather*}
	w(\phi) = w(\phi').
\end{gather*}}

{\bfseries The Principle of Unary Language Invariance, ULi}\\
{\itshape A probability function $w$ on $\SL$ satisfies \emph{Unary Language Invariance} if there exists a family of probability functions $w^{\mathcal{L}}$,
one for each finite (unary) language $\mathcal{L}$, satisfying Px (and by standing assumption Ex), such that $w=w^L$ and whenever $\mathcal{L}'\subseteq\mathcal{L}$, then $w^{\mathcal{L'}}=
w^{\mathcal{L}}\restr S\mathcal{L}'$, the restriction of $w$ to the sentences of $\mathcal{L}'$.

We say that $w$ satisfies ULi \emph{with $\mathcal{P}$} (for some principle $\mathcal{P}$), if each of the functions $w^{\mathcal{L}}$ satisfy $\mathcal{P}$.}

Notice that if $w^{\mathcal{L}}, w^{\mathcal{L'}}$ are members of a language invariant family and $\mathcal{L},\mathcal{L'}$ have the same number of predicates then
$w^{\mathcal{L}}$ is the same as  $w^{\mathcal{L'}}$ up to renaming predicates. For that reason it will, for the most part, be enough for us to focus our attention
on the members $w^{\mathcal{L}}$ of the family when $\mathcal{L}=L_q$ for some $q$.

This also illustrates the motivation for pairing ULi with Px; for if we were to drop Px from the
definition, then $w^{\mathcal{L}}$ would depend on the particular set of predicates
in $\mathcal{L}$, and we would be imposing some a priori semantics on the languages.\footnote{In fact, as one easily checks, without Px, all of the $w_{\vec{x}}$ functions
	can be extended to obtain a language invariant family, and the choices are arbitrary on every level, which makes Language
	Invariance in this form a trivial statement.}

Given a permutation $\sigma$ of the predicates of $L$, there is a unique permutation of the atoms of $L$ that is induced by $\sigma$:
For $\al(x) = \bigwedge_{i=1}^q P_i^{\epsilon_i}(x)$ an atom of $L$, let $\sigma\al(x)$ be the atom given by
\begin{gather*}
	\sigma\al(x) = \bigwedge_{i=1}^q \sigma(P_i)^{\epsilon_i}(x).
\end{gather*}
This now in turn induces a permutation on $\SL$ in the obvious way. Abusing notation, we identify these permutations of atoms and $L$-sentences with $\sigma$.
We shall write \emph{$\sigma$ is induced by Px} to indicate that $\sigma$ arises from a permutation of predicates.

\section{A First Representation Theorem}

Since the $w_{\vec{x}}$ are the building blocks for probability functions satisfying Ex (see de Finetti's Theorem above), these functions are of special interest to us.
We will therefore begin by studying when they satisfy ULi, equivalently when probability functions satisfying Ex and IP satisfy ULi.

Suppose a probability function $w$ on some language $L$ satisfied Predicate Exchangeability. Then the probability that $w$ assigns
any atom $\al$ of $L$ only depends on the number of predicates in $\al$ that occur negated.\footnote{This is an arbitrary choice. One could also
	count the number of predicates that occur positively in $\al$, as the argument is symmetrical.}
To see this notice that if $\al, \al'$ are atoms then $\al'$ can be obtained from $\al$ by a permutation of predicates just if both atoms have the same number of negated predicates.

It is thus convenient to introduce a function assigning each atom the corresponding number of
predicates:
\begin{defi}
	Let $L=L_q$. Define $\gamma_q:\{1,\dotsc,2^q\}\rightarrow\{0,\dotsc,q\}$ by
	\begin{gather*}
		\gamma_q(i) = k \Iff \al_i\text{ contains $k$ negated predicates.}
	\end{gather*}
	We shall drop the index $q$ whenever it is understood from the context.
\end{defi}

Now considering $\vec{c}\in\DD{q}$ it follows that $w_{\vec{c}}$ satisfies Predicate Exchangeability if and only if
$c_i = c_j$ whenever $\gamma(i)=\gamma(j)$. With this in mind we shall assume that our enumeration of the atoms is such that the number of
negated predicates is non-decreasing as we move right through $\al_1, \al_2, \dotsc,\al_{2^q}$. Since for each $i\in\{0,\dotsc,q\}$ there are
$\binom{q}{i}$ atoms of $L_q$ with $i$ predicates occurring negatively we therefore have that for $w_{\vec{c}}$ satisfying Px
\begin{gather*}
	\vec{c} = \<\C_0,\C_1,\dotsc,\C_1,\C_2,\dotsc,\C_2,\dotsc,\C_{q-1},\dotsc,\C_{q-1},\C_q\>,
\end{gather*}
i.e. $c_i = \C_{\gamma(i)}$ for $i=1,2, \ldots, 2^q$, and
\begin{gather*}
	\sum_{i=0}^q \binom{q}{i}\C_i = 1.
\end{gather*}
Thus any such $\vec{c}$ gives us a unique $\vec{\C} = \<\C_0,\C_1,\C_2,\dotsc,\C_q\>$ with the properties
\begin{gather*}
	\forall i\in\{0,\dotsc,q\}\,\, \C_i\geq 0 \text{ and } 1 = \sum_{i=0}^q \binom{q}{i}\C_i.
\end{gather*}
Conversely, any $\vec{\C}$ with these properties provides a unique $\vec{c}\in\DD{q}$ such that $w_{\vec{c}}$ satisfies Px, giving us
a 1-1 correspondence between these $\vec{c} \in \DD{q}$  and the elements of
\begin{gather}
	\hatDD{q} := \left\{\vec{\C} = \<\C_0,\C_1,\C_2,\dotsc,\C_q\>\| \forall i\in\{0,\dotsc,q\}\,\C_i\geq 0 \text{ and } 1 = \sum_{i=0}^q \binom{q}{i}\C_i\right\}.\label{eq2.1}
\end{gather}
We shall refer to elements of the set above as the \emph{alternative notation} for such a $\vec{c}\in\DD{q}$.

\vspace{1ex} Given an atom $\al$ of $L_q$, we can view this atom as a quantifier-free sentence in the extended language $L_{q+1}$, and obtain
\begin{gather*}
	\al (x) \equiv \al^{+}(x) \vee \al^{-}(x) = \left(\al(x) \wedge P_{q+1}(x)\right) \vee \left(\al(x) \wedge \neg P_{q+1}(x)\right).
\end{gather*}
Now suppose $\vec{c}\in\DD{q}$, $\vec{d}\in\DD{q+1}$ are such that $w_{\vec{d}}\restr \SL_q = w_{\vec{c}}$ and both satisfy Px.
Then by the logical equivalence given above, we must have
\begin{gather*}
	w_{\vec{c}} (\al) = w_{\vec{d}}\,(\al) = w_{\vec{d}}\,(\al^{+}) + w_{\vec{d}}\,(\al^{-}).
\end{gather*}
Suppose $\vec{\C}\in\hatDD{q}$, $\vec{\Dcal}\in\hatDD{q+1}$ are the corresponding alternative notations for $\vec{c}$ and $\vec{d}$. Then we
obtain for each $i\in\{0,\dotsc,q\}$,
\begin{gather*}
	\C_i = \Dcal_i + \Dcal_{i+1}.
\end{gather*}
The following proposition generalizes this to ULi families.

\begin{prop}
	Let $w_{\vec{c}}$ be a probability function on $L_q$. Suppose $w_{\vec{c}}$ is a member of a ULi with IP family $\W$
	and assume $w_{\vec{d}}\in\W$ is a probability function on $L_r$ for some $r>q$. Let $\vec{\C}, \vec{\Dcal}$ be the corresponding
	alternative notations for $\vec{c}$, $\vec{d}$. Then for each $j\in\{0,\dotsc,q\}$, we have
	\begin{gather}
		\C_j = \sum_{k=j}^{r-q+j}\binom{r-q}{k-j}\Dcal_k.\label{eq2.2}
	\end{gather}
\end{prop}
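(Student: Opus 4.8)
The plan is to reduce \eqref{eq2.2} to a single application of finite additivity, using only the defining clauses of a ULi family. First I note that, since $w_{\vec c}$ and $w_{\vec d}$ both lie in $\W$ and $L_q\subseteq L_r$, the restriction clause in the definition of ULi yields $w_{\vec c}=w_{\vec d}\restr\SL_q$. Hence it suffices to evaluate $w_{\vec d}(\al)$ for an atom $\al$ of $L_q$ having exactly $j$ negated predicates, because $w_{\vec c}(\al)=\C_j$ by the definition of the alternative notation.

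Next I would express $\al$ inside the larger language, iterating the splitting $\al\equiv\al^{+}\vee\al^{-}$ used just before the proposition over all of $P_{q+1},\dots,P_r$. Writing $\be_{\vec\epsilon}(x):=\al(x)\wedge\bigwedge_{i=1}^{r-q}P_{q+i}^{\epsilon_i}(x)$ for $\vec\epsilon\in\{0,1\}^{r-q}$, one gets
\begin{gather*}
	\al(x)\equiv\bigvee_{\vec\epsilon\in\{0,1\}^{r-q}}\be_{\vec\epsilon}(x),
\end{gather*}
a disjunction of $2^{r-q}$ pairwise mutually exclusive atoms of $L_r$. The atom $\be_{\vec\epsilon}$ has $j$ negated predicates among $P_1,\dots,P_q$ and a further $|\{i:\epsilon_i=0\}|$ among $P_{q+1},\dots,P_r$, so the number of $\vec\epsilon$ for which $\be_{\vec\epsilon}$ has in total $k$ negated predicates is $\binom{r-q}{k-j}$, for each $k$ with $j\le k\le j+(r-q)$. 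Since $w_{\vec d}$ satisfies Px, $w_{\vec d}(\be_{\vec\epsilon})$ depends only on that total, and by the definition of the alternative notation $\vec{\Dcal}$ of $\vec d$ it equals $\Dcal_k$. Applying (P2),
\begin{gather*}
	\C_j=w_{\vec c}(\al)=w_{\vec d}(\al)=\sum_{\vec\epsilon\in\{0,1\}^{r-q}}w_{\vec d}\bigl(\be_{\vec\epsilon}\bigr)=\sum_{k=j}^{r-q+j}\binom{r-q}{k-j}\Dcal_k,
\end{gather*}
which is \eqref{eq2.2}.

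An alternative, more in the spirit of the one-step computation preceding the proposition, is induction on $r-q$: the case $r-q=1$ is precisely the identity $\C_i=\Dcal_i+\Dcal_{i+1}$ already established, and the inductive step inserts the intermediate member $w^{L_{r-1}}\in\W$ --- which again has an alternative notation, this being where the hypothesis that $\W$ is a ULi family \emph{with IP} enters --- then combines the inductive hypothesis with the one-step relation via Pascal's rule $\binom{m}{l-1}+\binom{m}{l}=\binom{m+1}{l}$ (equivalently, Vandermonde's identity).

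I do not anticipate a genuine obstacle; the points requiring care are purely bookkeeping. They are: checking that the restriction clause really delivers $w_{\vec c}=w_{\vec d}\restr\SL_q$ for two members of one family; confirming that an atom of $L_q$ is logically equivalent to the disjunction of exactly the $2^{r-q}$ atoms of $L_r$ refining it, and that these are mutually exclusive so that (P2) applies; and --- in the inductive variant --- verifying that each intermediate $w^{L_s}$ for $q\le s\le r$ is of the form $w_{\vec x}$, so that its alternative notation is defined.
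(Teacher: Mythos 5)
Your proposal is correct, and your primary argument takes a mildly different (and somewhat more direct) route than the paper. The paper proves \eqref{eq2.2} by induction on $s=r-q$: the base case is the one-step identity $\C_j=\Dcal_j+\Dcal_{j+1}$ obtained from $\al\equiv\al^{+}\vee\al^{-}$, and the inductive step passes through the intermediate family member on $L_{q+p}$, combining the one-step relation with Pascal's rule to produce the binomial coefficients. You instead refine the atom $\al$ of $L_q$ all the way to $L_r$ in one go, writing it as the disjunction of the $2^{r-q}$ mutually exclusive atoms $\be_{\vec\epsilon}$ of $L_r$ extending it, and the coefficient $\binom{r-q}{k-j}$ appears directly as the count of extensions with total $k$ negated predicates; the restriction clause of ULi gives $w_{\vec c}=w_{\vec d}\restr\SL_q$, Px (via the alternative notation) gives $w_{\vec d}(\be_{\vec\epsilon}(a_1))=\Dcal_k$, and one application of (P2) finishes. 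This is sound and avoids the induction entirely; it also makes transparent that, for the statement as phrased, IP is only needed to guarantee that the family members at the two levels in question are of the product form $w_{\vec x}$ (so that the alternative notations exist), whereas the paper's inductive route additionally invokes the intermediate members' values $\Dcal_k'$ at every level $q<s<r$ --- the point you correctly flag as the place where the ``with IP'' hypothesis enters in that variant. Your sketched inductive alternative is essentially the paper's proof verbatim. The only cosmetic looseness is writing $w_{\vec c}(\al)$ for an atom rather than an instantiated sentence such as $\al(a_1)$, a convention the paper itself uses in the discussion preceding the proposition.
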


\begin{proof}
	We show this by induction on $s := r - q$. In case $s = 1$, we have for each $j\in\{0,\dotsc,q\}$,
	\begin{gather*}
		\C_j = \Dcal_j + \Dcal_{j+1},
	\end{gather*}
	since for $\al$ an atom of $L_q$ with $j$ negated predicates, we have in $L_r$ ($=L_{q+1}$)
	\begin{gather*}
		\al = \al^{+} \vee \al^{-},
	\end{gather*}
	where $\al^{+}, \al^{-}$ are atoms of $L_r$ with $j, j+1$ negated predicates, respectively.
	
	Now let $s = p + 1$ and assume the result holds for $p$. Let $\Dcal_i'$ denote the corresponding values for the atoms of $L_{q+p}$.
	By the inductive hypothesis we have
	\begin{gather*}
		\C_j = \sum_{k=j}^{(q + p) - q + j}\binom{(q+p)-q}{k-j}\Dcal_k'.
	\end{gather*}
	Just as in the case $s = 1$ we have $\Dcal_k' = \Dcal_k + \Dcal_{k+1}$ for each $0\leq k\leq q+p$, so we obtain
$$		\C_j =
		 \sum_{k=j}^{p+j}\binom{p}{k-j}(\Dcal_k + \Dcal_{k+1})
		= \sum_{k=j}^{p+1+j}\binom{p+1}{k-j}\Dcal_k
		= \sum_{k=j}^{r - q + j}\binom{r-q}{k-j}\Dcal_k,
$$

	as required.
\end{proof}

With this proposition in mind, we are ready to proceed to the first Representation Theorem.

\begin{thm}\label{thm1}
	Let $\vec{c}\in\DD{q}$ and $w_{\vec{c}}$ be a probability function satisfying Px. Then $w_{\vec{c}}$ is a member of
	a ULi with IP family $\W = \{w_{\vec{d}_r}\|\vec{d}_r\in\DD{r}\}$ if and only if each entry $c_i$ of
	$\vec{c}$ is of the form
	\begin{gather}
		c_i = \int_{[0,1]} x^{\gamma(i)} (1-x)^{q-\gamma(i)}\,d\rho(x)\label{eq2.3}
	\end{gather}
	for some normalized $\sigma$-additive measure $\rho$ on $[0,1]$.
\end{thm}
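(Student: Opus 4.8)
The plan is to extract the measure $\rho$ from de Finetti's Representation Theorem, applied not to the constants of the language but to the sequence of predicates evaluated at a single fixed constant; and, conversely, given $\rho$, to build the required language-invariant family explicitly from it.

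\emph{The ``only if'' direction.} Suppose $w_{\vec c}$ belongs to a ULi with IP family $\W$; I would work with the members $w^{L_r}$ of $\W$ on the languages $L_r=\{P_1,\ldots,P_r\}$ (so $w^{L_q}=w_{\vec c}$), which by ULi cohere under restriction and hence amalgamate to one function $w^{\infty}$ on $\bigcup_r\QFSL_r$ with $w^{\infty}\restr\SL_r=w^{L_r}$. Fix the constant $a_1$ and consider the $\{0,1\}$-valued random variables $X_i$ recording the truth value of $\neg P_i(a_1)$. For any $n$, every permutation of $\{1,\ldots,n\}$ is induced by a permutation of the predicates $P_1,\ldots,P_n$, so Px for $w^{L_n}$ says precisely that the joint $w^{\infty}$-distribution of $X_1,\ldots,X_n$ is permutation-invariant; thus $(X_i)_{i\ge 1}$ is an exchangeable sequence of events. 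By de Finetti's theorem for exchangeable $\{0,1\}$-sequences there is then a normalized $\sigma$-additive measure $\rho$ on $[0,1]$ with
\[
	w^{\infty}\Bigl(\bigwedge_{i=1}^{s}\neg P_i(a_1)\wedge\bigwedge_{i=s+1}^{s+t}P_i(a_1)\Bigr)=\int_{[0,1]}x^{s}(1-x)^{t}\,d\rho(x)
\]
for all $s,t\ge 0$. For an atom $\al_i$ of $L_q$ with $\gamma(i)$ negated predicates, using Px to assume these are exactly $P_1,\ldots,P_{\gamma(i)}$, reading off $c_i=w_{\vec c}(\al_i(a_1))=w^{\infty}(\al_i(a_1))$ from the display with $s=\gamma(i)$, $t=q-\gamma(i)$ yields \eqref{eq2.3}.

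\emph{The ``if'' direction.} Conversely, given $\rho$ with \eqref{eq2.3}, I would set $\Dcal^{(r)}_k:=\int_{[0,1]}x^{k}(1-x)^{r-k}\,d\rho(x)$ for all $r\ge1$, $k=0,\ldots,r$. The binomial theorem gives $\sum_{k=0}^{r}\binom{r}{k}\Dcal^{(r)}_k=\int_{[0,1]}\bigl(x+(1-x)\bigr)^{r}\,d\rho(x)=1$, so $\vec{\Dcal}^{(r)}\in\hatDD{r}$ and is thus the alternative notation of some $\vec d_r\in\DD{r}$ with $w_{\vec d_r}$ satisfying Px; being of the form $w_{\vec x}$, it also satisfies Ex and IP. Since $\Dcal^{(r)}_k=\int_{[0,1]}x^{k}(1-x)^{r-k}\bigl((1-x)+x\bigr)\,d\rho(x)=\Dcal^{(r+1)}_k+\Dcal^{(r+1)}_{k+1}$, summing over the two $L_{r+1}$-extensions of each atom and applying (P2) (as in the one-step computation preceding the Proposition) gives $w_{\vec d_{r+1}}\restr\SL_r=w_{\vec d_r}$; iterating this and adjoining the remaining finite unary languages by the obvious renamings, $\W:=\{w_{\vec d_r}\}_{r\ge1}$ is a language-invariant family each member of which satisfies IP. Finally \eqref{eq2.3} is exactly the statement $\Dcal^{(q)}_{\gamma(i)}=c_i$ for all $i$, i.e.\ $w_{\vec d_q}=w_{\vec c}$, so $w_{\vec c}\in\W$.

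\emph{The main obstacle.} The substantive point is the ``only if'' direction, and specifically the recognition that the exchangeability in play is not the ambient Constant Exchangeability but the exchangeability --- forced by Px together with language invariance --- of the sequence of predicates applied to a fixed constant. Once that is in place, $\rho$ drops out of de Finetti's theorem (equivalently, of a Hausdorff moment argument: the numbers $b_m=w(\bigwedge_{i=1}^{m}\neg P_i(a_1))$ form a completely monotone sequence, as the $k$-th forward difference of $(b_m)_m$ equals the probability of an atom-fragment with $m$ negated and $k$ unnegated literals, hence is nonnegative), and reconciling $\rho$ with the single vector $\vec c$ is then just the triangular linear system expressing the $\C_j$ via the $b_m$, which \eqref{eq2.3} solves.
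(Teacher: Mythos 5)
Your proposal is correct, but the necessity direction takes a genuinely different route from the paper's. The paper argues nonstandardly: by Transfer the ULi with IP family has a member $w_{\vec{b}}$ on $L_\nu$ for a nonstandard $\nu$, the level-$q$ member is recovered through the marginalization formula $\C_j=\sum_{\kappa}\binom{\nu-q}{\kappa-j}\Bcal_\kappa$, and the measure $\rho$ is manufactured as the Loeb-type standard part of the discrete measure putting weight $\gamma_\kappa$ at $\kappa/\nu$, the integral formula \eqref{eq2.3} then coming from Loeb integration theory. You instead notice that Px at every level, together with the coherence of the family under restriction, makes the $\{0,1\}$-valued sequence of truth values of $\neg P_i(a_1)$, $i=1,2,\dots$, exchangeable, and you invoke the classical de Finetti/Hausdorff moment theorem for exchangeable $0$--$1$ sequences to obtain $\rho$ directly, reading off $c_i=w_{\vec{c}}(\al_i(a_1))$ to get \eqref{eq2.3}; this is a sound argument (the swap of the roles of predicates and constants is exactly the right observation) and it is more elementary, avoiding the nonstandard universe altogether. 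It even gives a little more than is asked: IP at the other levels is never used, so membership of $w_{\vec{c}}$ in \emph{any} ULi family already forces \eqref{eq2.3}. What the paper's heavier machinery buys is the marginalize-from-$L_\nu$/Loeb-measure technique that is reused later (the $\std\nabla^L_\Upsilon$ functions and Theorem \ref{RepThm}), so the authors develop it here. Your sufficiency direction --- setting $\Dcal^{(r)}_k=\int_{[0,1]}x^k(1-x)^{r-k}\,d\rho(x)$, checking $\sum_{k}\binom{r}{k}\Dcal^{(r)}_k=1$ and the consistency $\Dcal^{(r)}_k=\Dcal^{(r+1)}_k+\Dcal^{(r+1)}_{k+1}$, which via the product form of the $w_{\vec{d}_r}$ yields $w_{\vec{d}_{r+1}}\restr\SL_r=w_{\vec{d}_r}$ --- is essentially the same argument the paper leaves as a routine check, spelled out in more detail.
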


\begin{proof}
	We will use methods from Nonstandard Analysis working in a suitable nonstandard universe $^{*}V$, see for example \cite{Cutland}.
	The key idea to the proof is to marginalize some $w_{\vec{c}}$ on some infinite language
	to finite languages, rather than constructing extensions of some $w_{\vec{d}}$ on a finite language to each finite level.
	Suppose we have such a ULi with IP family $\W$ of probability functions, so for each $r\in\N$, we have some $w^{(r)}$ on $L_r$ in this family. By the Transfer Principle
	this holds for each $r\in\StarN$, so we can pick some nonstandard natural number $\nu\in\StarN\setminus\N$ and consider
	$w^{(\nu)}$. Now $w^{(\nu)}\restr \SL_r = w^{(r)}$ for each $r<\nu$, as these are members of the same ULi family and
	we can retrieve our original family $\W$ by looking at functions of the form $w^{(\nu)}\restr \SL_r$ for $r\in\N$, taking standard
	parts -- denoted as usual by ${}^{\circ}$ -- where necessary.

	In more detail let $^{*}V$ be a nonstandard universe that contains at least $\DD{q}$ for finite $q\in\N$, all probability functions
	$w_{\vec{b}}$ satisfying Px and everything else needed in this proof. Let $\nu\in\StarN$ be nonstandard and
	consider $\vec{b}\in\DD{\nu}$ such that $w_{\vec{b}}$ on $L_{\nu}$ satisfies Px. Assume that $\vec{\Bcal}$ is the
	alternative notation for $\vec{b}$  given by \eqref{eq2.1}. For each $q<\nu$,
	we can define a probability function on $L_q$ {\it in } $^{*}V$
satisfying Px by letting
	\begin{gather}
		\C_j = \sum_{\kappa=j}^{\nu-q+j}\binom{\nu-q}{\kappa-j}\Bcal_{\kappa}\label{eq2.4}
	\end{gather}
	for $j=0,\dotsc,q$. In general, this gives $\vec{c}\in\Star{\DD{q}}$, so we need to take the standard part of $\vec{c}$, denoted $\std\vec{c}$, to
	get a probability function $w_{\std\vec{c}}$ in $V$.
	
	We will first look at $\vec{\Bcal}$ when all weight is concentrated on a single $\Bcal_{\kappa}$, $0\leq\kappa\leq\nu$.
	Since we need to have $\sum_{\kappa=0}^{\nu}\binom{\nu}{\kappa}\Bcal_{\kappa} = 1$, we obtain
	\begin{gather*}
		\Bcal_\kappa = \binom{\nu}{\kappa}^{-1}.
	\end{gather*}
	Then we get for $0\leq j\leq q$
	\begin{align}
		\C_j = \binom{\nu-q}{\kappa-j}\Bcal_{\kappa} &= \binom{\nu-q}{\kappa-j}\cdot \binom{\nu}{\kappa}^{-1}\nonumber\\
		&=\frac{(\nu-q)!\cdot\kappa!\cdot(\nu-\kappa)!}{(\kappa-j)!\cdot(\nu-q-\kappa+j)!\cdot\nu!}\nonumber\\
		&=\frac{\kappa\cdot(\kappa-1)\dotsm(\kappa-j+1)\cdot(\nu-\kappa)\dotsm(\nu-\kappa-q+j+1)}
		{\nu\cdot(\nu-1)\dotsm(\nu-q+1)},\label{eq2.5}
	\end{align}
	thus leading to the standard part being
	\begin{gather}
		\std\C_j = \leftidx{^{\circ}}{\left(\left(\frac{\kappa}{\nu}\right)^j\cdot\left(1-\frac{\kappa}{\nu}\right)^{q-j}\right)} =
		 \leftidx{^{\circ}}{\left(\frac{\kappa}{\nu}\right)}^j\cdot \left(1 -  \leftidx{^{\circ}}{\left(\frac{\kappa}{\nu}\right)}\right)^{q-j}.\label{eq2.6}
	\end{gather}
	
	Now consider an arbitrary $\vec{\Bcal}=\<\Bcal_0,\dotsc,\Bcal_{\nu}\>$. Then for each $0\leq\kappa\leq\nu$ there exists
	$\gamma_{\kappa}\in\Star{[0,1]}$ such that we can write
	\begin{gather*}
		\Bcal_{\kappa} = \gamma_{\kappa}\cdot\binom{\nu}{\kappa}^{-1}.
	\end{gather*}
 Note that since
	\begin{gather*}
		\sum_{\kappa=0}^{\nu}\binom{\nu}{\kappa}\Bcal_{\kappa} = 1
		\intertext{we must have}
		\sum_{\kappa=0}^{\nu}\gamma_{\kappa} = 1.
	\end{gather*}

	Then using \eqref{eq2.5} we see that each summand in $\C_j$ will be of the form
	\begin{gather*}
		\gamma_{\kappa}\cdot\binom{\nu-q}{\kappa-j}\binom{\nu}{\kappa}^{-1},
	\end{gather*}
	thus $\std\C_j$ will become
	\begin{gather}
		\std\C_j = \leftidx{^{\circ}}{\left(\sum_{\kappa=j}^{\nu-q+j}\gamma_{\kappa}\cdot
		\binom{\nu-q}{\kappa-j}\binom{\nu}{\kappa}^{-1}\right)}.\label{eq2.7}
	\end{gather}
	Since we are only interested in the standard part, we can add the finitely many summands for $\kappa=0,\dotsc,j-1,\nu-q+j+1,\dotsc,\nu$ without changing
	$\std\C_j$ (assuming that $0<j<q$), as we have
	\begin{align*}
		&\leftidx{^{\circ}}{\left(\sum_{\kappa=0}^{\nu}\gamma_{\kappa}\cdot\binom{\nu-q}{\kappa-j}\binom{\nu}{\kappa}^{-1}\right)} - \std\C_j \\
		&= \leftidx{^{\circ}}{\left(\sum_{\kappa=0}^{j-1}\gamma_{\kappa}\cdot\binom{\nu-q}{\kappa-j}\binom{\nu}{\kappa}^{-1}\right)}
		+ \leftidx{^{\circ}}{\left(\sum_{\kappa=\nu-q+j+1}^{\nu}\gamma_{\kappa}\cdot\binom{\nu-q}{\kappa-j}\binom{\nu}{\kappa}^{-1}\right)}\\
		&= \sum_{\kappa=0}^{j-1}\leftidx{^{\circ}}{\left(\gamma_{\kappa}\cdot\binom{\nu-q}{\kappa-j}\binom{\nu}{\kappa}^{-1}\right)}
		+ \leftidx{^{\circ}}{\left(\sum_{\kappa=\nu-q+j+1}^{\nu}\gamma_{\kappa}\cdot\binom{\nu-q}{\kappa-j}\binom{\nu}{\kappa}^{-1}\right)}\\
		& = 0+0,
	\end{align*}
	because for $\kappa\in\{0,\dotsc,j-1,\nu-q+j+1,\dotsc,\nu\}$, either $\std(\kappa/\nu)=0$ or $\std(1-\kappa/\nu)=0$, so the first and last sum vanish as each
	consists of finitely many terms. Note that in case $j=0,q$, either the first or the second summand is empty, and therefore we can apply the same argument for $j=0,q$
	as well, giving
	\begin{gather}
		\std\C_j = \leftidx{^{\circ}}{\left(\sum_{\kappa=0}^{\nu}\gamma_{\kappa}\cdot\binom{\nu-q}{\kappa-j}\binom{\nu}{\kappa}^{-1}\right)}\label{eq2.8}
	\end{gather}
	for $j\in\{0,\dotsc,q\}$.
	
	Now let $N = \{0,\dotsc,\nu\}$ and (in $^{*}V$ of course)
 let $\mu$ be the Loeb counting measure on $N$
	(see example (1), section 2 in \cite{Cutland}). Then we can write \eqref{eq2.8} 
 as
	\begin{gather}\label{eq2.85}
		\std\C_j = \leftidx{^{\circ}}{\int_{N}\Bigr. \gamma_\kappa \cdot\binom{\nu-q}{\kappa-j}\binom{\nu}{\kappa}^{-1}\,d\mu(\kappa)}.
	\end{gather}
	
	Let $\mu'$ be the discrete measure on $\Star{[0,1]}$ which for $\kappa \in N$ gives the point
	$\kappa/\nu$ measure $\gamma_\kappa.$
	Then we get
	\begin{gather}
		\int_{N} \gamma_\kappa \cdot\binom{\nu-q}{\kappa-j}\cdot \binom{\nu}{\kappa}^{-1}\,d\mu(\kappa)
		= \int_{\Star{[0,1]}}\binom{\nu-q}{x \cdot \nu-j}\cdot \binom{\nu}{x \cdot \nu}^{-1} \,d\mu'(x).\label{eq2.9}
	\end{gather}

	Now let $\rho$ be the measure {\it in } $V$ on $[0,1]$ which for a Borel subset $A$ of $[0,1]$ gives
	\begin{gather}
		\rho(A) = \std \mu'(^{*}A).
	\end{gather}
	By well known results from Loeb Measure Theory, see for example \cite{Cutland},
	\begin{gather}\label{eq2.95}
		\leftidx{^{\circ}}{\int_{\Star{[0,1]}}\binom{\nu-q}{x \cdot \nu-j}\cdot \binom{\nu}{x \cdot \nu}^{-1} \,d\mu'(x)}
		= \int_{[0,1]} \leftidx{^{\circ}}{\left(\binom{\nu-q}{x \cdot \nu-j}\cdot \binom{\nu}{x \cdot \nu}^{-1}\right)} \, d\rho(x).
	\end{gather}
	Combining \eqref{eq2.6},\eqref{eq2.85},\eqref{eq2.9},\eqref{eq2.95} now gives that 
	\begin{gather}
		\std\C_j = \int_{[0,1]} x^j\cdot (1-x)^{q-j}\,d\rho(x)\label{eq2.12}
	\end{gather}
	We obtain a $\vec{c}\in\DD{q}$ by letting
	\begin{gather*}
		\vec{c} = \<\std\C_0,\std\C_1,\dotsc, \std\C_1,\dotsc, \std\C_{q-1},\dotsc, \std \C_{q-1},\std \C_q \>.
	\end{gather*}
	As we can marginalize $\vec{b}$ in the above way to any $r\in\N$, we obtain that given a family of functions $\{w_{\vec{d}_r} \| \vec{d}_r\in\DD{r}\}$
	such that each $\vec{d}_r$ is obtained by marginalizing some $\vec{b}\in\DD{\nu}$ and therefore satisfies \eqref{eq2.3}, this
	family satisfies Unary Language Invariance.
	
	For the converse it is straightforward to check that  any $w_{\vec{c}}$  for which all the $c_i$ in $\vec{c}$ are of the form \eqref{eq2.12} does satisfy ULi,
	the required family member on $L_r$ being obtained simply by changing $q$ to $r$ with the same measure $\rho$.
\end{proof}

However, as the following example will show, the probability functions of the form $w_{\vec{c}}$ satisfying ULi with IP are not the building blocks
that generate all probability functions satisfying ULi:

\begin{example}\label{ex1}
	Let $c_0^{L_2}$ be the probability function on $L_2$ given by
	\begin{gather*}
		c_0^{L_2} = 4^{-1}\left( w_{\<1,0,0,0\>} + w_{\<0,1,0,0\>} + w_{\<0,0,1,0\>} + w_{\<0,0,0,1\>}\right).
	\end{gather*}
	Then $c_0^{L_2}$ satisfies ULi
 as it is a member of Carnap's Continuum of Inductive
	Methods (see e.g. \cite{ParisVencovskaBook}).  
	However, both $\<0,1,0,0\>$ and $\<0,0,1,0\>$ are not of the form \eqref{eq2.3}, and thus $c_0^{L_2}$ shows that we
	cannot have a Representation Theorem for $w$ satisfying ULi of the form
	\begin{gather*}
		w = \int_{\DD{q}} w_{\vec{x}}\,\,d\mu(\vec{x})
	\end{gather*}
	with $\mu$ giving all weight to $\vec{c}$ of the form \eqref{eq2.3}.
\end{example}

\section{The Representation Theorem for $w$ satisfying ULi}

In the previous section, we used a probability function satisfying Px + IP
on the infinite language $L_\nu$ to construct a language invariant family by
marginalizing to each finite level.

In this section we shall instead derive a representation theorem for just ULi by using an arbitrary
state description $\Upsilon$ of $L_\nu$ to construct a probability function satisfying
Px by averaging over all permutations of predicates, similarly to the definition
of $c_0^{L_2}$ in \thref{ex1}.

Let $\Upsilon(P_1,\dotsc,P_\nu,a_1,\dotsc,a_\nu)$ be the state description of $L_\nu$ given by
\begin{gather*}
	\Upsilon(P_1,\dotsc,P_\nu,a_1,\dotsc,a_\nu) = \bigwedge_{i=1}^\nu\bigwedge_{j=1}^\nu P_i^{\epsilon_{i,j}}(a_j).
\end{gather*}
Then we can represent $\Upsilon$ by the $\nu\times\nu$ - matrix
\begin{gather}
	\begin{pmatrix}
		\epsilon_{1,1}&\epsilon_{1,2}&\dotsb&\epsilon_{1,\nu}\\
		\epsilon_{2,1}&\epsilon_{2,2}&\dotsb&\epsilon_{2,\nu}\\
		\vdots&\vdots&\ddots&\vdots\\
		\epsilon_{\nu,1}&\epsilon_{\nu,2}&\dotsb&\epsilon_{\nu,\nu}
	\end{pmatrix}.\label{eq2.14}
\end{gather}

Now consider the $q\times\nu$ - matrix $\Psi$ where the $j$'th row of $\Psi$ is the
$i_j$'th row of $\Upsilon$, for some $i_1,\dotsc,i_q\in\{1,\dotsc,\nu\}$, not necessarily distinct.
 Then  we can similarly think of $\Psi$ as a state description $\Psi(a_1, \ldots, a_\nu)$ of $L_q$.  So each column
of $\Psi$ represents an atom of $L_q$, and we obtain $\vec{c}\in\Star{\DD{q}}$ by letting
\begin{gather*}
	c_i = \frac{|\{j\|\Psi\models\al_i(a_j)\}|}{\nu}.
\end{gather*}
We thus obtain for each $\<i_1,\dotsc,i_q\>$ with $1\leq i_1,\dotsc,i_q\leq \nu$ some
$w_{\vec{c}}$ for $\vec{c}\in\Star{\DD{q}}$, which we shall denote by $w^\Upsilon_{\<i_1,\dotsc,i_q\>}$.

We can now define the functions that we will then use to prove the representation theorem for
general ULi functions.

\begin{defi}
	Let $\Upsilon(P_1,\dotsc,P_\nu,a_1,\dotsc,a_\nu)$ be a state description
	of $L_\nu$ for $\nu$ distinct constants. Let $L=L_q$ for some finite $q$.
	For $i_1,\dotsc,i_q\in\{1,\dotsc,\nu\}$, not necessarily distinct, let $w^\Upsilon_{\<i_1,\dotsc,i_q\>}$ be given as above.
	
	Define the function $\nabla^L_\Upsilon$	on $\SL$ by
	\begin{gather*}
		\nabla^L_\Upsilon = \sum_{e:\{1,\dotsc,q\}\rightarrow\{1,\dotsc,\nu\}}\frac{1}{\nu^q}w^\Upsilon_{\<e(1),\dotsc,e(q)\>}.
	\end{gather*}
\end{defi}

Instead of just marginalizing to the first $q$ rows, as we did in the case of $w_{\vec{c}}$,
$\nabla^L_\Upsilon$ now also averages over all permutations of the predicates. One can think of
this as picking $q$ rows from the matrix representing $\Upsilon$ \emph{with replacement} to obtain the predicates
$P_1,\dotsc,P_q$ of $L_q$.

Before our next result we need to recall another principle, see \cite{Hill}, \cite{ParisVencovskaBook}.

{\bfseries The Weak Irrelevance Principle, WIP}\\
{\itshape A probability function $w$ on $\SL$ satisfies \emph{Weak Irrelevance} if whenever
$\theta, \phi \in\QFSL$ have no constants nor predicates in common then
\begin{gather*}
	w(\theta \wedge \phi) = w(\theta)\cdot w(\phi).
\end{gather*}}

\begin{thm}\label{thmNabla}
	Let $\Upsilon(P_1,\dotsc,P_\nu,a_1,\dotsc,a_\nu)$ be a state description of $L_\nu$ and let
	$L=L_q$. Then the function $\std\nabla^L_\Upsilon$ is (can be extended to) a probability function on $\SL$ satisfying ULi + WIP.
\end{thm}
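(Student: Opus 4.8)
The plan is to work inside the nonstandard universe $\Star{V}$ and regard $\nabla^L_\Upsilon$ as an object there. By construction each $w^\Upsilon_{\<e(1),\dots,e(q)\>}$ is one of the functions $w_{\vec{c}}$ with $\vec{c}\in\Star{\DD{q}}$, hence by transfer of (P1)--(P3) an internal probability function on $\Star{\SL_q}$; and $\nabla^L_\Upsilon$ is a hyperfinite internal convex combination (over the internal set of all maps $\{1,\dots,q\}\to\{1,\dots,\nu\}$, with equal weights $\nu^{-q}$) of these, so by transfer of ``a convex combination of probability functions is a probability function'' it is again an internal probability function. In particular $\nabla^L_\Upsilon(\phi)\in\Star{[0,1]}$ for every $\phi$, so $\std\nabla^L_\Upsilon(\phi)\in[0,1]$ is well defined, and since $\std$ is additive on finite sums the transfers of (P1) and (P2) show that $\std\nabla^L_\Upsilon$ satisfies (P1), (P2) on $\QFSL_q$; by \thref{Theorem1} it then extends uniquely to a probability function on $\SL_q$. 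Constant Exchangeability is immediate in the same spirit: each $w_{\vec{c}}$ satisfies Ex, so by transfer does each $w^\Upsilon_{\<e(1),\dots,e(q)\>}$, hence so does the average $\nabla^L_\Upsilon$ \emph{exactly}, and taking standard parts of these equalities gives Ex for $\std\nabla^L_\Upsilon$. The real work is Px, ULi and WIP; for each I would check that the relevant defining identity already holds exactly at the nonstandard level, so that passing to $\std\nabla^L_\Upsilon$ is automatic.

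The one structural fact I would isolate and then reuse is that restricting a function $w_{\vec{c}}$ on $L_q$ to a sublanguage is the same as marginalising $\vec{c}$ --- this is done in the excerpt for deleting one predicate, via the characterisation of the $w_{\vec{x}}$ by Ex $+$ IP, and iterates --- and that for the functions $w^\Upsilon_{\<i_1,\dots,i_q\>}$, whose defining vector records the atom-frequencies among the $\nu$ columns of the matrix with rows $i_1,\dots,i_q$ of $\Upsilon$, this marginalisation is exactly deletion of the corresponding rows of that matrix. Two consequences. First, if $\sigma$ permutes the predicates of $L_q$ and $\sigma\phi$ is the corresponding transform of $\phi\in\SL_q$, then permuting the predicates of $\phi$ has the same effect on $w^\Upsilon_{\<i_1,\dots,i_q\>}$ as permuting the rows, i.e.\ permuting the entries of $\<i_1,\dots,i_q\>$; since $e$ runs over \emph{all} maps $\{1,\dots,q\}\to\{1,\dots,\nu\}$, so does the permuted index, and the defining sum of $\nabla^L_\Upsilon$ is unchanged --- hence $\nabla^L_\Upsilon$, and so $\std\nabla^L_\Upsilon$, satisfies Px, and the same holds at every finite level $r$. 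Second, if $\theta\in\QFSL_q$ involves only the predicates with indices in a set $A$, then $w^\Upsilon_{\<i_1,\dots,i_q\>}(\theta)$ depends only on $(i_j)_{j\in A}$.

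For ULi I would use the \emph{same} $\Upsilon$ at every level, setting $w^{L_r}:=\std\nabla^{L_r}_\Upsilon$ and extending to an arbitrary finite unary language by renaming predicates, which is unambiguous by the Px just established. Coherence under $L_q\subseteq L_{q+1}$: a probability function on $L_q$ is determined by its values on state descriptions of $L_q$, and for any $i_{q+1}$ the restriction to $\SL_q$ of $w^\Upsilon_{\<i_1,\dots,i_q,i_{q+1}\>}$ equals $w^\Upsilon_{\<i_1,\dots,i_q\>}$ (delete the last row), so summing over $i_{q+1}$ just contributes a factor $\nu$ and yields $\nabla^{L_{q+1}}_\Upsilon\restr\SL_q=\nabla^{L_q}_\Upsilon$ exactly; iterating and taking standard parts gives coherence for all $r\ge q$, so $\{w^{\mathcal{L}}\}$ is a language invariant family whose members satisfy Px, which is ULi for $\std\nabla^L_\Upsilon$. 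For WIP, let $\theta,\phi\in\QFSL_q$ have no common constants and no common predicates, with predicate index sets $A$ and $A'$, $A\cap A'=\emptyset$. Each $w^\Upsilon_{\<i_1,\dots,i_q\>}$ is some $w_{\vec{c}}$ and hence satisfies IP, so $w^\Upsilon_{\<i_1,\dots,i_q\>}(\theta\wedge\phi)=w^\Upsilon_{\<i_1,\dots,i_q\>}(\theta)\cdot w^\Upsilon_{\<i_1,\dots,i_q\>}(\phi)$; and by the second consequence above the first factor depends only on $(i_j)_{j\in A}$ and the second only on $(i_j)_{j\in A'}$. As $A$ and $A'$ are disjoint, averaging over $e$ factorises into the product of the two partial averages, giving $\nabla^{L_q}_\Upsilon(\theta\wedge\phi)=\nabla^{L_q}_\Upsilon(\theta)\cdot\nabla^{L_q}_\Upsilon(\phi)$ exactly; standard parts then give WIP for $\std\nabla^{L_q}_\Upsilon$, and the same at every level makes every member of the family satisfy WIP.

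I expect the main obstacle to be nothing more than getting that one structural observation stated precisely and then deploying it cleanly, since it is the load-bearing ingredient of Px, ULi and WIP simultaneously; all the surrounding nonstandard bookkeeping --- hyperfinite averages are internal probability functions, Gaifman's theorem produces the extension, and $\std$ commutes with finite sums and products --- is routine and is needed only to turn the exact nonstandard identities into statements about the genuine real-valued function $\std\nabla^L_\Upsilon$.
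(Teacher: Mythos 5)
Your proposal is correct and follows essentially the same route as the paper: Px by reindexing the average over $e$, ULi by taking the same $\Upsilon$ at every level with coherence coming from the fact that deleting the added row marginalises $w^\Upsilon_{\<e'(1),\dotsc,e'(q),f(1)\>}$ back to $w^\Upsilon_{\<e'(1),\dotsc,e'(q)\>}$ (the paper's $c_i^0+c_i^1=c_i$ step proving \eqref{eq2.15}), and WIP from IP for each $w^\Upsilon_{\<e(1),\dotsc,e(q)\>}$ together with factorising the uniform average over $e$. If anything, your explicit observation that $w^\Upsilon_{\<e(1),\dotsc,e(q)\>}(\theta)$ depends only on those coordinates of $e$ indexing the predicates occurring in $\theta$ justifies the product-of-averages step more carefully than the corresponding display in the paper's WIP computation.
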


\begin{proof}
	From the definition of $\nabla^L_\Upsilon$ it is obvious that $\std\nabla^L_\Upsilon$ is a probability function satisfying Ex.
	
	For Px, let $\sigma$ be a permutation of the predicates of $L$. Then we obtain
	\begin{align*}
		\std\nabla^L_\Upsilon(\sigma\Theta) &= \leftidx{^{\circ}}{\left[\sum_{e:\{1,\dotsc,q\}\rightarrow\Upsilon}
				\frac{1}{\nu^q}\cdot w^\Upsilon_{\<e(1),\dotsc,e(q)\>}(\sigma\Theta)\right]}\\
			&= \leftidx{^{\circ}}{\left[\sum_{e:\{1,\dotsc,q\}\rightarrow\Upsilon}
				\frac{1}{\nu^q}\cdot w^\Upsilon_{\<e(\sigma^{-1}(1)),\dotsc,e(\sigma^{-1}(q))\>}(\Theta)\right]},
			\intertext{since $\sigma$ permutes the predicates of $L$,}
			&= \leftidx{^{\circ}}{\left[\sum_{e\circ\sigma^{-1}:\{1,\dotsc,q\}\rightarrow\Upsilon}
				\frac{1}{\nu^q}\cdot w^\Upsilon_{\<e\circ\sigma^{-1}(1),\dotsc,e\circ\sigma^{-1}(q)\>}(\Theta)\right]}\\
			&= \leftidx{^{\circ}}{\left[\sum_{e':\{1,\dotsc,q\}\rightarrow\Upsilon}
				\frac{1}{\nu^q}\cdot w^\Upsilon_{\<e'(1),\dotsc,e'(q)\>}(\Theta)\right]} = \std\nabla^L_\Upsilon(\Theta).
	\end{align*}
	
	To show that ULi holds, notice that for $\Theta(a_1,\dotsc,a_n)$ the state description
	\begin{gather*}
		\Theta(a_1,\dotsc,a_n) = \bigwedge_{j=1}^n\al_{h_j}(a_j),
	\end{gather*}
	we obtain on $L_{q+1}$,
	\begin{gather*}
		\Theta(a_1,\dotsc,a_n) = \bigvee_{\epsilon_1,\dotsc,\epsilon_n\in\{0,1\}}\bigwedge_{j=1}^n\al_{h_j}^{\epsilon_j}(a_j),
	\end{gather*}
	where
	\begin{gather*}
		\al_{h_j}^{\epsilon_j}(x) = \al_{h_j}(x)\wedge P_{q+1}^{\epsilon_j}(x).
	\end{gather*}
	We obtain
	\begin{align*}
		\std\nabla^{L_{q+1}}_\Upsilon&(\Theta)\\
		&= \sum_{\epsilon_1,\dotsc,\epsilon_n\in\{0,1\}}
		\std\nabla^{L_{q+1}}_\Upsilon\left(\bigvee_{\epsilon_1,\dotsc,\epsilon_n\in\{0,1\}}\bigwedge_{j=1}^n\al_{h_j}^{\epsilon_j}\right)\\
		&= \sum_{\epsilon_1,\dotsc,\epsilon_n\in\{0,1\}}
		\leftidx{^{\circ}}{\left[\sum_{e:\{1,\dotsc,q+1\}\rightarrow\{1,\dotsc,\nu\}}\frac{1}{\nu^{q+1}}
			 w^\Upsilon_{\<e(1),\dotsc,e(q+1)\>}\left(\bigvee_{\epsilon_1,\dotsc,\epsilon_n\in\{0,1\}}\bigwedge_{j=1}^n\al_{h_j}^{\epsilon_j}\right)\right]}\\
		&= \leftidx{^{\circ}}{\left[\sum_{e:\{1,\dotsc,q+1\}\rightarrow\{1,\dotsc,\nu\}}\frac{1}{\nu^{q+1}}
			\sum_{\epsilon_1,\dotsc,\epsilon_n\in\{0,1\}} w^\Upsilon_{\<e(1),\dotsc,e(q+1)\>}\left(\bigvee_{\epsilon_1,\dotsc,\epsilon_n\in\{0,1\}}\bigwedge_{j=1}^n\al_{h_j}^{\epsilon_j}\right)\right]}\\
		&=\leftidx{^{\circ}}{\left[\sum_{e':\{1,\dotsc,q\}\rightarrow\{1,\dotsc,\nu\}}\frac{1}{\nu^q}\cdot\right.}\\
		&\qquad\qquad \left.\sum_{f:\{1\}\rightarrow\{1,\dotsc,\nu\}}\frac{1}{\nu}
			\sum_{\epsilon_1,\dotsc,\epsilon_n\in\{0,1\}} w^\Upsilon_{\<e'(1),\dotsc,e'(q),f(1)\>}\left(\bigvee_{\epsilon_1,\dotsc,\epsilon_n\in\{0,1\}}\bigwedge_{j=1}^n\al_{h_j}^{\epsilon_j}\right)\right],
	\end{align*}
	where
	\begin{gather*}
		e(i) = \begin{cases}
					e'(i)&\text{if $i\in\{1,\dotsc,q\}$,}\\
					f(1)&\text{if $i=q+1$}.
				\end{cases}
	\end{gather*}

	It now remains to show that
	\begin{gather}
		\sum_{f:\{1\}\rightarrow\{1,\dotsc,\nu\}}\frac{1}{\nu}
			\sum_{\epsilon_1,\dotsc,\epsilon_n\in\{0,1\}} w^\Upsilon_{\<e'(1),\dotsc,e'(q),f(1))\>}\left(\bigvee_{\epsilon_1,\dotsc,\epsilon_n\in\{0,1\}}\bigwedge_{j=1}^n\al_{h_j}^{\epsilon_j}\right) = w^\Upsilon_{\<e'(1),\dotsc,e'(q)\>}(\Theta)\label{eq2.15}
	\end{gather}
	for arbitrary $e':\{1,\dotsc,q\}\rightarrow\hat{\Upsilon}$. There are $\vec{c}\in\Star{\DD{q}}$,
	$\vec{d}\in\Star{\DD{q+1}}$ such that
	\begin{align*}
		w^\Upsilon_{\<e'(1),\dotsc,e'(q)\>} = w_{\vec{c}},\\
		w^\Upsilon_{\<e'(1),\dotsc,e'(q),f(1)\>} = w_{\vec{d}}.
	\end{align*}
	Given $\be_j$ an atom of $L_{q+1}$, there is a unique atom $\al_i$ of $L_q$ and a
	unique $\epsilon\in\{0,1\}$ such that
	\begin{gather*}
		\be_j = \al_i^{\epsilon}.
	\end{gather*}
	Thus, we can unambiguously write $d_j = c_i^{\epsilon}$ for these $i$, $\epsilon$. We then
	obtain
	\begin{align}
		\sum_{\epsilon_1,\dotsc,\epsilon_n\in\{0,1\}} w^\Upsilon_{\<e'(1),\dotsc,e'(q),f(1))\>}\left(\bigvee_{\epsilon_1,\dotsc,\epsilon_n\in\{0,1\}}\bigwedge_{j=1}^n\al_{h_j}^{\epsilon_j}\right) &= \sum_{\epsilon_1,\dotsc,\epsilon_n\in\{0,1\}} \prod_{j=1}^n c_{h_j}^{\epsilon_j}\nonumber\\
		&= \prod_{j=1}^n (c_{h_j}^0 + c_{h_j}^1).\label{eq2.16}
	\end{align}
	Since by picking row $f(1)$ as the $q+1$'st row we partition the occurrences of the atom
	$\al_j$ of $L_q$ obtained by picking rows $e'(1),\dotsc,e'(q)$ into occurrences of the atoms
	$\al_j^1$ and $\al_j^0$ of $L_{q+1}$, and this is the only way in which we obtain these
	atoms, we must have $c_i^0 + c_i^1 = c_i$ for each $i\in\{1,\dotsc,2^q\}$. Thus \eqref{eq2.16}
	gives
	\begin{gather*}
		\prod_{j=1}^n (c_{h_j}^0 + c_{h_j}^1) = \prod_{j=1}^n c_{h_j} = w_{\<e'(1),\dotsc,e'(q)\>}(\Theta).
	\end{gather*}
	The equation \eqref{eq2.15} now follows.
	
	It remains to show that Weak Irrelevance holds for $\std\nabla^L_\Upsilon$. Let $\theta(a_1,\dotsc,a_m)$,\\
	$\phi(a_{m+1},\dotsc,a_{m+n})$ be state descriptions of $L$ having no constant or predicates in common. We can assume that
	$\theta\in\QFSL^1$, $\phi\in\QFSL^2$, where $L^1\cap L^2=\emptyset$ and $L^1\cup L^2=L$. Let $\al_i$ range over the atoms of $L^1$, $\be_j$ over the
	atoms of $L^2$. Then we obtain in $L^1$ and $L^2$, respectively,
	\begin{align*}
		\theta(a_1,\dotsc,a_m) &= \bigwedge_{i=1}^m \al_{h_i}(a_i),\\
		\phi(a_{m+1},\dotsc,a_{m+n}) &= \bigwedge_{j=1}^n \be_{g_j}(a_{m+j}).
	\end{align*}
	Suppose that $L^1 = \{P_1,\dotsc,P_p\}$, $L^2=\{P_{p+1},\dotsc,P_{p+r}\}$. Then we obtain in $L$
	\begin{align*}
		\theta(a_1,\dotsc,a_m) &= \bigvee_{1\leq s_1,\dotsc,s_m\leq 2^r}\bigwedge_{i=1}^m \al_{h_i}(a_i)\wedge\be_{s_i}(a_i),\\
		\phi(a_{m+1},\dotsc,a_{m+n}) &= \bigvee_{1\leq t_1,\dotsc,t_n\leq 2^p}\bigwedge_{j=1}^n \al_{t_j}(a_{m+j})\wedge\be_{g_j}(a_{m+j}),
	\end{align*}
	and by ULi for $\std\nabla^L_\Upsilon$,
	\begin{align}
		\std\nabla^{L^1}_\Upsilon(\theta) &= \std\nabla^{L}_\Upsilon\left(\bigvee_{1\leq s_1,\dotsc,s_m\leq 2^r}\bigwedge_{i=1}^m \al_{h_i}\wedge\be_{s_i}\right),\label{eq2.17}\\
		\std\nabla^{L^2}_\Upsilon(\phi) &= \std\nabla^{L}_\Upsilon\left(\bigvee_{1\leq t_1,\dotsc,t_n\leq 2^p}\bigwedge_{j=1}^n \al_{t_j}\wedge\be_{g_j}\right)\label{eq2.18}.
	\end{align}
	Now for $\theta\wedge\phi$, we obtain in $L$
	\begin{align*}
		\std\nabla^L_\Upsilon&(\theta\wedge\phi)\\
			&= \std\nabla^L_\Upsilon\left(\bigvee_{1\leq s_1,\dotsc,s_m\leq 2^r}\bigvee_{1\leq t_1,\dotsc,t_n\leq 2^p}\left(\bigwedge_{i=1}^n\al_{h_i}\wedge\be_{s_i}\right)\wedge\left(\bigwedge_{j=1}^n\al_{t_j}\wedge\be_{g_j}\right)\right)\\
			&= \sum_{1\leq s_1,\dotsc,s_m\leq 2^r}\sum_{1\leq t_1,\dotsc,t_n\leq 2^p}\leftidx{^{\circ}}{\left[\sum_{e:\{1,\dotsc,q\}\rightarrow\{1,\dotsc,\nu\}}\frac{1}{\nu^q}
			\cdot\right.}\\
				&\qquad\qquad \left.w^\Upsilon_{\<e(1),\dotsc,e(q)\>}\left(\left(\bigwedge_{i=1}^n\al_{h_i}\wedge\be_{s_i}\right)\wedge\left(\bigwedge_{j=1}^n\al_{t_j}\wedge\be_{g_j}\right)\right)\right]\\
			&= \sum_{1\leq s_1,\dotsc,s_m\leq 2^r}\sum_{1\leq t_1,\dotsc,t_n\leq 2^p}\leftidx{^{\circ}}{\left[\sum_{e:\{1,\dotsc,q\}\rightarrow\{1,\dotsc,\nu\}}\frac{1}{\nu^q}
			\cdot w^\Upsilon_{\<e(1),\dotsc,e(q)\>}\left(\bigwedge_{i=1}^n\al_{h_i}\wedge\be_{s_i}\right)\right.}\\
			&\qquad\qquad\left.\cdot w^\Upsilon_{\<e(1),\dotsc,e(q)\>}\left(\bigwedge_{j=1}^n\al_{t_j}\wedge\be_{g_j}\right)\right],
		\intertext{by IP for $w^\Upsilon_{\<e(1),\dotsc,e(q)\>}$,}
			&= \left(\sum_{1\leq s_1,\dotsc,s_m\leq 2^r}\leftidx{^{\circ}}{\left[\sum_{e:\{1,\dotsc,q\}\rightarrow\{1,\dotsc,\nu\}}\frac{1}{\nu^q}
					\cdot w^\Upsilon_{\<e(1),\dotsc,e(q)\>}\left(\bigwedge_{i=1}^n\al_{h_i}\wedge\be_{s_i}\right)\right]}\right)\\
			&\qquad	\cdot \left(\sum_{1\leq t_1,\dotsc,t_n\leq 2^p}\leftidx{^{\circ}}{\left[\sum_{e:\{1,\dotsc,q\}\rightarrow\{1,\dotsc,\nu\}}\frac{1}{\nu^q}
					\cdot w^\Upsilon_{\<e(1),\dotsc,e(q)\>}\left(\bigwedge_{j=1}^n\al_{t_j}\wedge\be_{g_j}\right)\right]}\right)\\
			&= \left(\sum_{1\leq s_1,\dotsc,s_m\leq 2^r}\std\nabla^L_\Upsilon\left(\bigwedge_{i=1}^m \al_{h_i}\wedge\be_{s_i}\right)\right)
				\cdot\left(\sum_{1\leq t_1,\dotsc,t_n\leq 2^p}\std\nabla^L_\Upsilon\left(\bigwedge_{j=1}^n\al_{t_j}\wedge\be_{h_j}\wedge\right)\right)\\
			&= \std\nabla^L_\Upsilon(\theta) \cdot \std\nabla^L_\Upsilon(\phi),
	\end{align*}
	by \eqref{eq2.17} and \eqref{eq2.18}.
\end{proof}

\begin{thm}\label{RepThm}
	Let $w$ be a probability function on $L=L_q$. Then $w$ satisfies ULi if and only if there exists some normalized $\sigma$-additive measure $\rho$ such that
	\begin{gather}
		w = \int \std\nabla^L_\Upsilon\,\,d\rho(\Upsilon).\label{eq2.19}
	\end{gather}
\end{thm}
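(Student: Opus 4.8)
For the converse, given such a $\rho$ I would set $w^{\mathcal{L}} := \int \std\nabla^{\mathcal{L}}_\Upsilon\,d\rho(\Upsilon)$ for every finite unary language $\mathcal{L}$. By \thref{thmNabla} each $\std\nabla^{\mathcal{L}}_\Upsilon$ is a probability function satisfying Ex and Px and lying in the ULi family $\{\std\nabla^{\mathcal{L}}_\Upsilon\}_{\mathcal{L}}$. Since Ex and Px are preserved under $\rho$-mixtures, each $w^{\mathcal{L}}$ is (using \thref{Theorem1}) a probability function satisfying Ex and Px, and since restriction commutes with the integral, $\mathcal{L}'\subseteq\mathcal{L}$ yields $w^{\mathcal{L}}\restr S\mathcal{L}' = w^{\mathcal{L}'}$. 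Hence $\{w^{\mathcal{L}}\}$ witnesses that $w = w^L$ satisfies ULi.

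For the forward direction I would work in $\Star V$, along the lines of the proof of \thref{thm1}. Assume $w$ belongs to a ULi family; as noted in Section 2 it is enough to keep the members $w^{L_r}$, $r\in\N$, which by Transfer extend to an internal family $\{w^{L_r}\}_{r\in\StarN}$. Fix a nonstandard $\nu$, let $\hat\Upsilon$ be the hyperfinite set of the $2^{\nu^2}$ state descriptions $\Upsilon(P_1,\dotsc,P_\nu,a_1,\dotsc,a_\nu)$ of $L_\nu$, and let $\lambda$ be the internal probability measure on $\hat\Upsilon$ with $\lambda(\{\Upsilon\}) = w^{L_\nu}(\Upsilon)$ (the values summing to $1$ by transferred (P1), (P2)). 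The heart of the proof is the claim that, for every standard $q$ and every state description $\Theta = \bigwedge_{j=1}^n\al_{h_j}(a_j)$ of $L_q$,
\begin{gather*}
	\std\left(\sum_{\Upsilon\in\hat\Upsilon} w^{L_\nu}(\Upsilon)\cdot\nabla^{L_q}_\Upsilon(\Theta)\right) = w(\Theta).
\end{gather*}

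To prove the claim I would unwind the definitions: $\nabla^{L_q}_\Upsilon(\Theta)$ equals $\nu^{-(q+n)}$ times the number of tuples $\langle i_1,\dotsc,i_q,k_1,\dotsc,k_n\rangle\in\{1,\dotsc,\nu\}^{q+n}$ for which, for every $j$, column $k_j$ of the submatrix of $\Upsilon$ on rows $i_1,\dotsc,i_q$ carries the sign pattern of the atom $\al_{h_j}$. Interchanging the two summations, the sum over $\Upsilon$ for a fixed tuple becomes the $w^{L_\nu}$-value of the quantifier-free sentence prescribing those submatrix entries. When $i_1,\dotsc,i_q$ and $k_1,\dotsc,k_n$ are each pairwise distinct that sentence is a state description of the sublanguage $\{P_{i_1},\dotsc,P_{i_q}\}\subseteq L_\nu$ for the distinct constants $a_{k_1},\dotsc,a_{k_n}$; by transferred Px (permuting those predicates to $P_1,\dotsc,P_q$), transferred ULi (restricting to $L_q$), and Ex (relabelling the constants) its $w^{L_\nu}$-value is $w^{L_q}(\Theta) = w(\Theta)$, independently of the distinct tuple. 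Since $q$ and $n$ are standard and $\nu$ is infinite, the proportion of tuples with all $i_l$ and all $k_j$ distinct equals $\prod_{l=0}^{q-1}(1-l/\nu)\cdot\prod_{j=0}^{n-1}(1-j/\nu)\approx 1$; those tuples therefore contribute $\approx w(\Theta)$, while the infinitesimally few others contribute an infinitesimal (each of their sums over $\Upsilon$ lying in $[0,1]$), and the claim follows.

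Finally I would package this exactly as in \eqref{eq2.85}--\eqref{eq2.95}: let $\rho$ be the Loeb measure associated with $\lambda$ on $\hat\Upsilon$. As $\Upsilon\mapsto\nabla^{L_q}_\Upsilon(\Theta)$ is an internal $\Star{[0,1]}$-valued (hence $S$-integrable) function, Loeb integration gives $\int\std\nabla^{L_q}_\Upsilon(\Theta)\,d\rho(\Upsilon) = \std\left(\sum_{\Upsilon\in\hat\Upsilon} w^{L_\nu}(\Upsilon)\nabla^{L_q}_\Upsilon(\Theta)\right) = w(\Theta)$ for every state description $\Theta$ of $L$. Since $w$ and $\int\std\nabla^L_\Upsilon\,d\rho(\Upsilon)$ are probability functions on $\SL$ (the latter by \thref{thmNabla} and \thref{Theorem1}) agreeing on all state descriptions, they coincide by (P2) and \thref{Theorem1}, which is \eqref{eq2.19}. (If a measure on a standard space is preferred, one may push $\rho$ forward along $\Upsilon\mapsto\std\nabla^L_\Upsilon$ into the compact metrisable space of probability functions on $\SL$.) As in \thref{thm1}, I expect the real obstacle to be not the nonstandard bookkeeping but the combinatorial identity of the previous paragraph --- that drawing $q$ predicates and $n$ constants \emph{with replacement} from a $w^{L_\nu}$-random matrix $\Upsilon$ reproduces $w(\Theta)$ up to an infinitesimal; granting that, Transfer and Loeb integration finish matters essentially as in Section 3.
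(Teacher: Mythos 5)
Your proposal is correct and follows essentially the same route as the paper: pass through the ULi family to an extension $w^{L_\nu}$ for nonstandard $\nu$, use Px, Ex and the family restriction property to evaluate the sampled terms as $w(\Theta)$, note that sampling rows with replacement (which is what $\nabla^L_\Upsilon$ does) differs from sampling without replacement only by an infinitesimal, and convert the hyperfinite sum into the representation \eqref{eq2.19} via the Loeb measure. The only difference is organisational: the paper groups the state descriptions of $L_\nu$ into Px+Ex orbits $\bar{\Upsilon}$ and reads the key quantity as the counting ratio $|\{\Phi\in\bar{\Upsilon}\,|\,\Phi\models\Theta\}|/|\bar{\Upsilon}|$ on each orbit, whereas you keep the internal measure on individual state descriptions and interchange the two summations --- the same computation viewed from the other side.
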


\begin{proof}
	By \thref{thmNabla}, it is straightforward to see that any $w$ in the form \eqref{eq2.19} satisfies ULi, as it is a convex combination of ULi functions.

	For the other direction, suppose $w$ satisfied ULi. Then there is an extension $w^{L_\nu}$ of $w$ to $L_\nu$ and we obtain for $\Theta(a_1,\dotsc,a_n)$ a state
	description of $L$,
	\begin{gather}
		w(\Theta) = \sum_{\substack{\Phi(a_1,\dotsc,a_{\nu})\\\Phi\models\Theta}}w^{L_\nu}(\Phi),\label{eq2.20}
	\end{gather}
	where $\Phi$ ranges over the state descriptions of $L_\nu$. For a state description\\
	$\Upsilon(P_1,\dotsc,P_\nu,a_1,\dotsc,a_\nu)$, let
	\begin{gather*}
		\bar{\Upsilon} = \{\Upsilon(P_{\sigma(1)},\dotsc,P_{\sigma(\nu)},a_{\tau(1)},\dotsc,a_{\tau(\nu)}\| \sigma, \tau \text{ are permutations of }\{1,\dotsc,\nu\}\}.
	\end{gather*}
	Note that the sets $\bar{\Upsilon}$ partition the set of state descriptions of $L_\nu$. We can  now write \eqref{eq2.20} as
	\begin{align*}
		w(\Theta) &= \sum_{\bar{\Upsilon}}\sum_{\substack{\Phi\in\bar{\Upsilon}\\\Phi\models\Theta}}w^{L_\nu}(\Phi)\\
			&= \sum_{\bar{\Upsilon}}\frac{|\{\Phi\in\bar{\Upsilon}\|\Phi\models\Theta\}|}{|\bar{\Upsilon}|}w^{L_\nu}\left(\bigvee\bar{\Upsilon}\right),
	\end{align*}
	as $w^{L_\nu}$ is clearly constant on $\bar{\Upsilon}$ since it satisfies Px (and Ex).
	
	Now the ratio
	\begin{gather*}
		\frac{|\{\Phi\in\bar{\Upsilon}\|\Phi\models\Theta\}|}{|\bar{\Upsilon}|}
	\end{gather*}
	is equal to the probability that by randomly picking distinct predicates $P_{i_1},\dotsc,P_{i_q}$ and constants $a_{j_1},\dotsc,a_{j_n}$,
	we have that
	\begin{gather*}
		\Upsilon \models \sigma\Theta(a_{j_1},\dotsc,a_{j_n}),
	\end{gather*}
	where $\sigma$ is (an initial segment of) the permutation of predicates of $L_\nu$ with $\sigma(k) = i_k$ for $k\in\{1,\dotsc,q\}$.
	
	Note that with our definition of $\nabla^L_\Upsilon$, we allow the same row to be picked
	multiple times, so not all picks of rows represent a permutation of the predicates. Thus the difference between the probabilities given by $\nabla^L_\Upsilon$ and
	the above ratio is the difference between picking rows of $\Upsilon$ with and without replacement. However, since the probability of picking the same row twice
	is infinitesimal, it will disappear when taking standard parts.
	
	Thus we obtain
	\begin{gather*}
		\leftidx{^{\circ}}{\left(\frac{|\{\Phi\in\bar{\Upsilon}\|\Phi\models\Theta\}|}{|\bar{\Upsilon}|}\right)} = \std\nabla^L_\Upsilon(\Theta).
	\end{gather*}
	Now taking $\mu$ to be the measure on the $\bar{\Upsilon}$ given by $w^{L_\nu}$, we obtain
	\begin{gather*}
		\sum_{\bar{\Upsilon}}\frac{|\{\Phi\in\bar{\Upsilon}\|\Phi\models\Theta\}|}{|\bar{\Upsilon}|}w^{L_\nu}\left(\bigvee\bar{\Upsilon}\right)
			= \int \frac{|\{\Phi\in\bar{\Upsilon}\|\Phi\models\Theta\}|}{|\bar{\Upsilon}|}\,d\mu(\bar{\Upsilon}).
	\end{gather*}
	Taking standard parts, we obtain 
	\begin{align*}
		\leftidx{^{\circ}}{\int \frac{|\{\Phi\in\bar{\Upsilon}\|\Phi\models\Theta\}|}{|\bar{\Upsilon}|}\,d\mu(\bar{\Upsilon})} &= \int \leftidx{^{\circ}}{\left(\frac{|\{\Phi\in\bar{\Upsilon}\|\Phi\models\Theta\}|}{|\bar{\Upsilon}|}\right)}\,d\rho(\bar{\Upsilon})\\
		&= \int \std\nabla^L_\Upsilon\,d\rho(\bar{\Upsilon}),
	\end{align*}
	where $\rho$ is the Loeb measure given by the nonstandard measure $\mu$.
\end{proof}

Since $\std \nabla^L_{\Upsilon}$ satisfies WIP  we obtain the following theorem.

\begin{thm}
	The $\std\nabla^L_{\Upsilon}$ are the only functions satisfying ULi with WIP.
\end{thm}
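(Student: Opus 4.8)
The forward implication is \thref{thmNabla}, so suppose conversely that $w$ on $L=L_q$ satisfies ULi with WIP. By \thref{RepThm} we may write $w=\int\std\nabla^L_\Upsilon\,d\rho(\Upsilon)$; moreover the measure $\rho$ constructed in the proof of \thref{RepThm} is built solely from the fixed extension $w^{L_\nu}$ of $w$ to $L_\nu$ and makes no reference to $q$, so the \emph{same} $\rho$ represents every member of the family at once, $w^{L_r}=\int\std\nabla^{L_r}_\Upsilon\,d\rho(\Upsilon)$ for each standard $r<\nu$. The plan is to show that WIP forces the $\rho$-random quantity $\Upsilon\mapsto\std\nabla^L_\Upsilon(\Theta)$ to be $\rho$-almost surely equal to the constant $w(\Theta)$, for every state description $\Theta$ of $L$. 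Since there are only countably many state descriptions of $L$, intersecting the corresponding conull sets produces a single $\Upsilon_0$ with $\std\nabla^L_{\Upsilon_0}(\Theta)=w(\Theta)$ for all $\Theta$, and as a probability function is determined by its values on state descriptions this gives $\std\nabla^L_{\Upsilon_0}=w$, as required.

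The engine is a second-moment identity. Fix a state description $\Theta(a_1,\dots,a_n)=\bigwedge_{j=1}^n\al_{h_j}(a_j)$ of $L=L_q$ and let $\Theta'$ be the disjoint copy of $\Theta$ inside $L_{2q}$ obtained by simultaneously replacing each $P_i$ by $P_{q+i}$ and each $a_j$ by $a_{n+j}$, so $\Theta$ and $\Theta'$ share neither predicates nor constants. Applying WIP to $w^{L_{2q}}$, then Px and Ex to $w^{L_{2q}}$, and finally ULi (restriction to $\SL_q$), we get $w^{L_{2q}}(\Theta\wedge\Theta')=w^{L_{2q}}(\Theta)\cdot w^{L_{2q}}(\Theta')=w(\Theta)^2$. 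On the other hand, for \emph{every} state description $\Upsilon$ of $L_\nu$ one has
\[
\nabla^{L_{2q}}_\Upsilon(\Theta\wedge\Theta')=\bigl(\nabla^{L}_\Upsilon(\Theta)\bigr)^2 .
\]
Indeed, each $w^\Upsilon_{\<e(1),\dots,e(2q)\>}$ is one of the functions $w_{\vec c}$, hence satisfies IP and splits over the disjoint constant-blocks $\{a_1,\dots,a_n\}$ and $\{a_{n+1},\dots,a_{2n}\}$; and restricting the $L_{2q}$-atom frequencies of the rows $e(1),\dots,e(2q)$ to their first, respectively last, $q$ coordinates reproduces precisely the frequency vectors defining $w^\Upsilon_{\<e(1),\dots,e(q)\>}$ and $w^\Upsilon_{\<e(q+1),\dots,e(2q)\>}$ (using that $\Theta$ constrains only the first $q$ coordinates and $\Theta'$, under the relabelling, is $\Theta$ on the last $q$). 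Thus $w^\Upsilon_{\<e(1),\dots,e(2q)\>}(\Theta\wedge\Theta')=w^\Upsilon_{\<e(1),\dots,e(q)\>}(\Theta)\cdot w^\Upsilon_{\<e(q+1),\dots,e(2q)\>}(\Theta)$, and the normalised sum over $e\colon\{1,\dots,2q\}\to\{1,\dots,\nu\}$ factors into two identical copies of $\nu^{-q}\sum_{e'}w^\Upsilon_{\<e'(1),\dots,e'(q)\>}(\Theta)=\nabla^L_\Upsilon(\Theta)$. Passing to standard parts, $\std\nabla^{L_{2q}}_\Upsilon(\Theta\wedge\Theta')=\bigl(\std\nabla^{L}_\Upsilon(\Theta)\bigr)^2$.

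Now integrate against $\rho$. By \thref{RepThm} in the language $L_{2q}$ (with the same $\rho$, as noted), together with the first identity above, $\int\bigl(\std\nabla^L_\Upsilon(\Theta)\bigr)^2\,d\rho(\Upsilon)=w^{L_{2q}}(\Theta\wedge\Theta')=w(\Theta)^2$, whereas $\int\std\nabla^L_\Upsilon(\Theta)\,d\rho(\Upsilon)=w(\Theta)$. Hence the $\rho$-variance of $\Upsilon\mapsto\std\nabla^L_\Upsilon(\Theta)$ vanishes, so $\std\nabla^L_\Upsilon(\Theta)=w(\Theta)$ for $\rho$-almost every $\Upsilon$; intersecting over the countably many state descriptions $\Theta$ of $L$ and picking $\Upsilon_0$ in the resulting conull set finishes the proof.

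The step I expect to need the most care is the identity $\nabla^{L_{2q}}_\Upsilon(\Theta\wedge\Theta')=(\nabla^{L}_\Upsilon(\Theta))^2$ together with the observation that the $\rho$ of \thref{RepThm} depends only on $w^{L_\nu}$ (so that it is legitimate to integrate the $L_{2q}$-identity against that same measure). Both are essentially careful bookkeeping: which choices of rows of $\Upsilon$ yield which atoms of $L_{2q}$ as opposed to $L_q$, and the precise invocation of IP for the $w^\Upsilon_{\<\cdot\>}$ to separate the two disjoint blocks of constants. Everything else -- Px, Ex, ULi and WIP for $w^{L_{2q}}$, and the handling of standard parts and Loeb integrals -- is routine; conceptually, WIP is exactly the extra hypothesis that annihilates the variance of $\std\nabla^L_\Upsilon(\Theta)$ under $\rho$ and thereby collapses the mixture of \thref{RepThm} to a single $\Upsilon_0$.
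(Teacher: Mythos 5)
Your proof is correct and takes essentially the same route as the paper's: both are zero-variance arguments that combine the Representation Theorem (\thref{RepThm}) with WIP of a suitable extension of $w$ and the WIP/Px/ULi properties of the $\std\nabla^{L}_\Upsilon$ functions -- the paper phrases it as the vanishing of a double integral of $\bigl(\std\nabla^{L'}_\Psi(\theta)-\std\nabla^{L'}_\Phi(\theta)\bigr)^2$, you as the second moment equalling the square of the first. Your supporting observations (the factorization $\std\nabla^{L_{2q}}_\Upsilon(\Theta\wedge\Theta')=\bigl(\std\nabla^{L_q}_\Upsilon(\Theta)\bigr)^2$, which also follows from \thref{thmNabla}, and the level-independence of the Loeb measure $\rho$, which could alternatively be bypassed by applying \thref{RepThm} directly to $w^{L_{2q}}$ and using $\std\nabla^{L_{2q}}_\Upsilon\restr \SL_q=\std\nabla^{L_q}_\Upsilon$) are accurate.
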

	
\begin{proof}
	We follow essentially the proof for the analogous theorem for Atom Exchangeability, given in \cite{ParisVencovska11}.
	
	Let $w$ be a probability function satisfying  ULi with WIP. Let $\theta\in\QFSL$. Extend
	$w$ to $w'$ on some language $L'$ large enough so that we can permute the predicates and constants in
	$\theta$ to obtain $\theta'$ with no predicates nor constants in common with $\theta$. We can achieve this by picking
	$w'$ on $L'$ in the same ULi family as $w$, giving $w'\restr\SL = w$ and guaranteeing WIP for $w'$. By Px for $w'$
	we then have $w'(\theta)=w'(\theta')$. Now we clearly obtain
	\begin{align*}
		0 &= 2(w'(\theta\wedge\theta')-w'(\theta)\cdot w'(\theta'))\\
		&= \int\std\nabla^{L'}_\Psi(\theta\wedge\theta')\,d\mu(\Psi) - 2 \int\std\nabla^{L'}_\Psi(\theta)\,d\mu(\Psi)\cdot \int\std\nabla^{L'}_\Phi(\theta')\,d\mu(\Phi)\\
		&\qquad + \int\std\nabla^{L'}_\Phi(\theta\wedge\theta')\,d\mu(\Phi)\\
		&= \int\int\left( \std\nabla^{L'}_\Psi(\theta)^2 - 2\std\nabla^{L'}_\Psi(\theta)\cdot\std\nabla^{L'}_\Phi(\theta) + \std\nabla^{L'}_\Phi(\theta)^2\right)\,d\mu(\Psi)\,d\mu(\Phi)\\
		&= \int\int\left( \std\nabla^{L'}_\Psi(\theta) - \std\nabla^{L'}_\Phi(\theta)\right)^2\,d\mu(\Psi)\,d\mu(\Phi),
	\end{align*}
	using the Representation Theorem. Certainly, since the function under the integral is non-negative, there must be a measure
	$1$ set such that $\std\nabla^{L'}_\Psi$ is constant on this set for each $\theta\in\QFSL$, giving $w'=\std\nabla^{L'}_\Psi$
	for any $\Psi$ in this set. Since $w'\restr\SL = w$, i.e. $w= \std\nabla^{L'}_\Psi\restr\SL$, marginalizing $w'$ to $L$ yields
	$w = \std\nabla^L_\Psi$, as required.
\end{proof}

\section{A General Representation Theorem}

In the case of Atom Exchangeability (Ax) (see e.g. \cite[chapter 33]{ParisVencovskaBook}), we have a theorem stating that each $w$ satisfying Ax can be represented
as a difference of scaled ULi functions with Ax. In this section, we will prove the analogous version for Px. For the remainder of this section
we assume that $L=L_q$ for some $q\in\N$.

\begin{defi}
	Let $\vec{c}\in\DD{q}$. Let $\Sigma$ be the set of all permutations of atoms of $L$ that are induced by Px. Define the probability function $y_{\vec{c}}$
	on $\QFSL$ by
	\begin{gather*}
		y_{\vec{c}}(\Theta(a_1,\dotsc,a_n)) = \frac{1}{|\Sigma|} \sum_{\sigma\in\Sigma} w_{\sigma\vec{c}}(\Theta(a_1,\dotsc,a_n))
	\end{gather*}
	for state descriptions $\Theta(a_1,\dotsc,a_n)$ of $L$.
\end{defi}

Note that by definition, $y_{\vec{c}}$ satisfies Px. By a straightforward argument we obtain the following variation on de Finetti's Theorem:

\begin{thm}
	Let $w$ be a propability function on $\SL$ satisfying Px. Then there exists a normalized, $\sigma$-additive measure $\mu$ on the Borel
	sets of $\DD{q}$ such that
	\begin{gather}
		w\left(\bigwedge_{j=1}^n\al_{h_j}(a_j)\right) = \int_{\DD{q}} y_{\vec{c}}\left(\bigwedge_{j=1}^n\al_{h_j}(a_j)\right)\,d\mu(\vec{c}).\label{eq2.35}
	\end{gather}
	Conversely, given such a measure $\mu$, the function $w$ defined by \eqref{eq2.35} satisfies Px.
\end{thm}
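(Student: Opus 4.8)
The plan is to deduce this directly from de Finetti's Representation Theorem by symmetrising the representing measure over the (finite) group $\Sigma$ of atom permutations induced by Px. Since $w$ satisfies Ex by the standing assumption, de Finetti's Theorem already hands us a normalized $\sigma$-additive measure $\mu$ on the Borel sets of $\DD{q}$ with $w(\bigwedge_{j=1}^n\al_{h_j}(a_j)) = \int_{\DD{q}} w_{\vec{x}}(\bigwedge_{j=1}^n\al_{h_j}(a_j))\,d\mu(\vec{x})$, and the claim I would aim to prove is that \emph{this same} $\mu$ already satisfies \eqref{eq2.35} once $w_{\vec{x}}$ is replaced by $y_{\vec{x}}$ --- so no new measure need be constructed.

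First I would set down the bookkeeping for the action of $\Sigma$ on $\DD{q}$: writing $\sigma\vec{x}$ for the vector whose $i$-th coordinate is $x_{\sigma^{-1}(i)}$ --- the convention implicit in the definition of $y_{\vec{c}}$ --- a one-line computation from $w_{\vec{x}}(\bigwedge_{j}\al_{h_j}(a_j)) = \prod_i x_i^{n_i}$ gives $w_{\vec{x}}(\sigma\Theta) = w_{\sigma^{-1}\vec{x}}(\Theta)$ for every state description $\Theta$ and every $\sigma\in\Sigma$, and moreover that $\vec{c}\mapsto\sigma\vec{c}$ is a left action. I would also record the two facts that make the re-indexings below legitimate and the integrals meaningful: $\Sigma$ is closed under inverses (it is the image of the symmetric group on the $q$ predicates under the homomorphism sending a predicate permutation to its induced atom permutation), and $\vec{x}\mapsto\sigma\vec{x}$ is linear, hence Borel, while $\vec{c}\mapsto y_{\vec{c}}(\Theta)$ is polynomial in the coordinates, hence Borel.

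With that in hand the forward direction is a substitution. Fixing a state description $\Theta$ and using $w(\Theta)=w(\sigma\Theta)$ for all $\sigma\in\Sigma$ (Px), I would average over $\Sigma$ and feed in de Finetti:
\[
w(\Theta) = \frac{1}{|\Sigma|}\sum_{\sigma\in\Sigma} w(\sigma\Theta) = \frac{1}{|\Sigma|}\sum_{\sigma\in\Sigma}\int_{\DD{q}} w_{\sigma^{-1}\vec{x}}(\Theta)\,d\mu(\vec{x}) = \int_{\DD{q}} y_{\vec{x}}(\Theta)\,d\mu(\vec{x}),
\]
the last step because $\sigma\mapsto\sigma^{-1}$ permutes $\Sigma$, so the inner average is exactly $y_{\vec{x}}(\Theta)$; this is \eqref{eq2.35}. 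For the converse, given $\mu$ and $w$ defined by \eqref{eq2.35}, I would observe that each $y_{\vec{c}}$ is itself a mixture of the $w_{\vec{x}}$, namely the one against $|\Sigma|^{-1}\sum_{\sigma\in\Sigma}\delta_{\sigma\vec{c}}$, so $w$ is a mixture of the $w_{\vec{x}}$ against $\int|\Sigma|^{-1}\sum_{\sigma\in\Sigma}\delta_{\sigma\vec{c}}\,d\mu(\vec{c})$ and hence, by the converse half of de Finetti's Theorem (together with Theorem \ref{Theorem1} for the extension to $\SL$), is a probability function satisfying Ex; Px then follows from $y_{\vec{c}}(\sigma\Theta) = |\Sigma|^{-1}\sum_{\tau\in\Sigma} w_{\sigma^{-1}\tau\vec{c}}(\Theta) = y_{\vec{c}}(\Theta)$ by the same re-indexing, and integrating against $\mu$ gives $w(\sigma\Theta)=w(\Theta)$.

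I do not expect a genuine obstacle here --- the result is advertised as ``a straightforward argument'' --- and the only place warranting care is the bookkeeping in the second paragraph: matching the side on which $\sigma$ acts on coordinates to the convention baked into $y_{\vec{c}}$, and being explicit that the re-indexings $\sigma\leftrightarrow\sigma^{-1}$ and $\tau\mapsto\sigma^{-1}\tau$ are licensed precisely because $\Sigma$ is a group and not merely a set of permutations.
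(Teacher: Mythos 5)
Your argument is correct and is precisely the ``straightforward argument'' the paper leaves unwritten: symmetrise de Finetti's representation over the finite group $\Sigma$ of induced atom permutations, using $w_{\vec{x}}(\sigma\Theta)=w_{\sigma^{-1}\vec{x}}(\Theta)$ together with the group re-indexing to get \eqref{eq2.35} with the same measure $\mu$, and for the converse view each $y_{\vec{c}}$ as a mixture of the $w_{\vec{x}}$ so that Ex follows from de Finetti and Px from the invariance $y_{\vec{c}}(\sigma\Theta)=y_{\vec{c}}(\Theta)$. Since the paper gives no proof of this theorem, there is nothing further to compare; your care over the action convention, the group closure under inverses, and Borel measurability of $\vec{c}\mapsto y_{\vec{c}}(\Theta)$ covers the only points where the argument could slip.
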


The key to obtaining the desired General Representation Theorem will therefore involve finding a uniform representation of the building blocks $y_{\vec{c}}$ in terms
of a difference of ULi functions. The $\std\nabla^L_\Upsilon$ functions used for this proof will have a specific characterization that deserves a slightly
different notation. Since at this point, we will be working in the usual standard universe again, we will drop the standard part symbol $^{\circ}$
from the notation and assume that all $\nabla^L_\Upsilon$ from now on are given in their standard form.

Recalling the definition of $\nabla^L_\Upsilon$ note that for fixed $e:\{1,\dotsc,q\}\rightarrow\{1,\dotsc,\nu\}$, the function $w^\Upsilon_{\<e(1),\dotsc,e(q)\>}$ is
given by the $q\times\nu$ - matrix with the $i$'th row identical to the $e(i)$'th row of $\Upsilon$. Also, since with $w^\Upsilon_{\<e(1),\dotsc,e(q)\>}$ we also
have all the $w^\Upsilon_{\<\sigma(e(1)),\dotsc,\sigma(e(q))\>}$ for $\sigma$ ranging over the permutations of the predicates of $L$ occurring in
$\nabla^L_\Upsilon$, we see that this function is a convex combination of functions of the form $y_{\vec{c}}$.

We can now arrange $\nabla^L_\Upsilon$ to contain a copy of $y_{\vec{c}}$ for a given $\vec{c}\in\DD{q}$ as follows: Let $\Phi$ be the state description
represented by the matrix
\begin{gather*}
	\begin{pmatrix}
		\vline & & \vline & \vline & & \vline & & \vline & & \vline\\
		\al_1 & \dotsb & \al_1 & \al_2 & \dotsb & \al_2 & \dotsb & \al_{2^q} & \dotsb & \al_{2^q}\\
		\vline & & \vline & \vline & & \vline & & \vline & & \vline
	\end{pmatrix},
\end{gather*}
where $\al_i$ occurs $[c_i\cdot \nu]$ times. Now let $\p_1,\dotsc,\p_q \geq 0$ be such that $\sum_{i=1}^q \p_i = 1$ and let $\Upsilon$ be the $\nu\times\nu$ - matrix
containing $[\p_i\cdot\nu]$ copies of the $i$'th row of $\Phi$, for each $i$, and fill the remaining rows with arbitrary copies of rows from $\Phi$. Then
$\nabla^L_\Upsilon$ certainly contains a copy of $y_{\vec{c}}$.

With this in mind, we can modify the notation of $\nabla^L_\Upsilon$ to
\begin{gather*}
	\vec{\p}\nabla^L_\Upsilon
\end{gather*}
for $\vec{\p}=\<\p_1,\dotsc,\p_q\>$ to indicate that $\Upsilon$ contains only $q$ distinct rows, occurring with the frequency given by $\vec{\p}$.
We will write $\vec{\p}\nabla^L_{\Upsilon(\vec{c})}$ to indicate that $\Upsilon$ arises from $\vec{c}\in\DD{q}$ in this manner.

We can represent $\vec{\p}\nabla^L_{\Upsilon(\vec{c})}$ in terms of $y_{\vec{c}}$ as follows.  Let $K= \{\vec{n} \in \N^q \,\|\, \sum_{i=1}^q n_i = q\}$, so
$\vec{n}\in K$ represents the choices of picking rows from $\Upsilon$. Then we obtain the representation
\begin{gather}
	\vec{\p}\nabla^L_{\Upsilon(\vec{c})} = \sum_{\vec{n}\in K} \prod_{i=1}^q \p_i^{n_i} (n_1,\dotsc,n_q)!\, y_{\vec{c}_{\vec{n}}},\label{eq2.37}
\end{gather}
where $\vec{c}_{\vec{n}}$ results from picking rows according to $\vec{n}$ and (as standard)
$$ (n_1,\dotsc,n_q)! = \frac{(n_1 + n_2 + \ldots + n_q)!}{n_1!\, n_2! \ldots n_q!}= \binom{q}{n_1,\dotsc,n_q}.$$
Note that we need this multinomial coefficient here since $\vec{\p}\nabla^L_{\Upsilon(\vec{c})}$ is in fact a sum of $w_{\vec{e}}$, and although each of the
$w_{\vec{e}}$ occurring in $y_{\vec{c}}$ occurs, the normalizing constant exists only implicitly in $\vec{\p}\nabla^L_{\Upsilon(\vec{c})}$.
With this notation in mind, we can prove the first step needed to show the desired theorem.

\begin{lem}\label{lem2.s}
	Let $\vec{c}\in\DD{q}$. Then there exist $\lambda\geq 0$ and probability functions $w_1$, $w_2$ satisfying ULi such that
	\begin{gather*}
		y_{\vec{c}} = (1+\lambda) w_1 - \lambda w_2.
	\end{gather*}
\end{lem}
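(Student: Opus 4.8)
The plan is to exploit the identity \eqref{eq2.37}, which expresses each $\vec{\p}\nabla^L_{\Upsilon(\vec{c})}$ as a \emph{non-negative} combination of the building blocks $y_{\vec{c}_{\vec{n}}}$, one summand of which (the one with $\vec{n}$ having a single nonzero entry equal to $q$ in position $i$, for each $i$, appropriately combined) is $y_{\vec{c}}$ itself. Since each $\vec{\p}\nabla^L_{\Upsilon(\vec{c})}$ satisfies ULi by \thref{thmNabla}, the idea is to isolate $y_{\vec{c}}$ on one side of \eqref{eq2.37} and absorb the remaining $y_{\vec{c}_{\vec{n}}}$ terms into a second ULi function. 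Concretely, I would choose a convenient $\vec{\p}$ --- the uniform choice $\p_i = 1/q$ looks cleanest --- and rewrite \eqref{eq2.37} in the form $\vec{\p}\nabla^L_{\Upsilon(\vec{c})} = \mu\, y_{\vec{c}} + \sum_{\vec{n}} \kappa_{\vec{n}}\, y_{\vec{c}_{\vec{n}}}$ where $\mu>0$ is the coefficient of $y_{\vec{c}}$ and the remaining sum ranges over those $\vec{n}$ not producing $\vec{c}$, with each $\kappa_{\vec{n}}\geq 0$.

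The first step is therefore to pin down precisely which $\vec{n}\in K$ give $\vec{c}_{\vec{n}} = y_{\vec{c}}$ (equivalently $\vec{c}_{\vec{n}} = \vec{c}$ up to a Px-permutation): picking each of the $q$ distinct rows of $\Upsilon$ exactly once reconstitutes $\Phi$ and hence $\vec{c}$, and by Px any permutation of that choice does too, so the coefficient $\mu$ is a positive rational depending only on $q$ and $\vec{\p}$. The second step is to note that $\sum_{\vec{n}\in K}\prod_i \p_i^{n_i} (n_1,\dots,n_q)! = (\sum_i \p_i)^q = 1$ by the multinomial theorem, so the total mass of the right-hand side of \eqref{eq2.37} is $1$, consistent with everything being a probability function; in particular $\sum_{\vec{n}}\kappa_{\vec{n}} = 1-\mu$. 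The third step is the algebraic rearrangement: from $\vec{\p}\nabla^L_{\Upsilon(\vec{c})} = \mu\, y_{\vec{c}} + (1-\mu)\, w_2$, where
\[
	w_2 := \frac{1}{1-\mu}\sum_{\vec{n}\in K,\ \vec{c}_{\vec{n}}\neq\vec{c}} \prod_{i=1}^q \p_i^{n_i}(n_1,\dots,n_q)!\, y_{\vec{c}_{\vec{n}}},
\]
we solve for $y_{\vec{c}} = \mu^{-1}\,\vec{\p}\nabla^L_{\Upsilon(\vec{c})} - (\mu^{-1}-1)\, w_2$, i.e. $y_{\vec{c}} = (1+\lambda) w_1 - \lambda w_2$ with $w_1 := \vec{\p}\nabla^L_{\Upsilon(\vec{c})}$, $\lambda := \mu^{-1}-1 \geq 0$, and $1+\lambda = \mu^{-1}$.

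What remains is to verify that $w_1$ and $w_2$ both satisfy ULi. For $w_1 = \vec{\p}\nabla^L_{\Upsilon(\vec{c})}$ this is immediate from \thref{thmNabla}. For $w_2$ the point is that it is again a $\nabla$-type function --- or at worst a convex mixture of them: each $y_{\vec{c}_{\vec{n}}}$ can itself be realised (up to the multinomial bookkeeping) as a $\vec{\p}'\nabla^L_{\Upsilon(\vec{c}_{\vec{n}})}$ by the same construction that produced \eqref{eq2.37}, or more directly one observes that $w_2$ is the normalised ``complementary'' part of a single $\vec{\p}\nabla^L_{\Upsilon}$ and so inherits ULi from it; a convex combination of ULi functions with a common family is again ULi by the integral representation in \thref{RepThm}. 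The main obstacle I anticipate is this last bookkeeping step: making sure the multinomial coefficients in \eqref{eq2.37} are handled so that $w_2$ comes out as a genuine \emph{convex} combination of ULi functions (hence a probability function satisfying ULi) rather than merely an affine one --- this is exactly where the multinomial identity $\sum_{\vec{n}}\prod_i\p_i^{n_i}(n_1,\dots,n_q)! = 1$ does the work, and one must check that dropping the $\vec{c}_{\vec{n}}=\vec{c}$ terms and renormalising by $1-\mu$ preserves non-negativity of all coefficients, which it does since every coefficient in \eqref{eq2.37} is non-negative and $\mu<1$ whenever $q\geq 2$ (the case $q=1$ being trivial, with $y_{\vec{c}}$ already satisfying ULi so one takes $\lambda = 0$).
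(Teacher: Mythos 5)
There is a genuine gap, and it is exactly at the point you flag as ``bookkeeping''. Your decomposition $\vec{\p}\nabla^L_{\Upsilon(\vec{c})} = \mu\, y_{\vec{c}} + (1-\mu)\, w_2$ and the algebra solving for $y_{\vec{c}}$ are fine, and $w_1 = \nabla^L_{\Upsilon(\vec{c})}$ is indeed ULi by \thref{thmNabla}; but $w_2$ is a convex combination of the functions $y_{\vec{c}_{\vec{n}}}$ with repeated rows, and these are in general \emph{not} ULi, so $w_2$ has no reason to be ULi. Your first justification --- that each $y_{\vec{c}_{\vec{n}}}$ ``can itself be realised as a $\vec{\p}'\nabla^L_{\Upsilon(\vec{c}_{\vec{n}})}$'' --- is false: by \eqref{eq2.37} such a $\nabla$-function merely \emph{contains} $y_{\vec{c}_{\vec{n}}}$ as one summand among many others; extracting a single $y$ from a $\nabla$ is precisely the difficulty the lemma has to overcome, so this step is circular. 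Your second justification --- that $w_2$, being the normalised complementary part of a ULi function, ``inherits ULi from it'' --- proves too much: by the same reasoning the other complementary part, $y_{\vec{c}}$ itself, would inherit ULi, making the lemma trivial with $\lambda=0$; but the $y_{\vec{c}}$ are generally not ULi (this is the whole point of Section 5, and is why the General Representation Theorem produces a genuine difference with $\lambda>0$ rather than a convex mixture). Sub-mixtures of a ULi function need not admit any coherent Px-respecting extension family, and the split of $\nabla^{L_{r}}_{\Upsilon}$ into ``all rows distinct'' and ``some row repeated'' parts is not even compatible with restriction from $L_{r+1}$ to $L_r$, so the complementary parts do not assemble into a language invariant family.

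The paper's proof avoids this by never trying to push the unwanted $y_{\vec{c}_{\vec{n}}}$ terms into a single ULi remainder. Instead it takes a whole family of functions $\vec{\p}_{\vec{m}}\nabla^L_{\Upsilon(\vec{c})}$, one for each $\vec{m}\in K$, so that \eqref{eq2.37} becomes a linear system \eqref{eq2.38} with coefficient matrix $A$ having entries $\prod_{k}\p_{\vec{m},k}^{n_k}$; the substantial work is an inductive choice of the $\vec{\p}_{\vec{m}}$ (via a dominant-term estimate on the determinant) making $A$ regular. Inverting $A$ expresses $q!\,y_{\vec{c}}$ as a linear combination, with coefficients of both signs, of the ULi functions $\vec{\p}_{\vec{m}}\nabla^L_{\Upsilon(\vec{c})}$; grouping the positive and negative parts and normalising at $\top$ gives $y_{\vec{c}} = (1+\lambda)w_1 - \lambda w_2$ with $w_1, w_2$ convex combinations of genuine $\nabla$-functions (hence ULi) and with $\lambda$ independent of $\vec{c}$, which is also needed later when integrating over $\DD{q}$ in the General Representation Theorem. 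So your proposal needs to be replaced by (or repaired into) an argument of this matrix-inversion type; as it stands the claim ``$w_2$ satisfies ULi'' is unsupported and, in general, false.
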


\begin{proof}
	Fix $\vec{c}\in\DD{q}$. As demonstrated in the discussion above, we can easily find $\nabla^L_\Upsilon$ with $y_{\vec{c}}$ occurring in it,
	amongst other instances of $y_{\vec{e}}$. Thus, the problem reduces to finding a way to remove all of these other instances via ULi functions.
	
	To this end, suppose that for each $\vec{m} \in K$ we have $\vec{\p}_{\vec{m}}\nabla^L_{\Upsilon(\vec{c})}$ 
 such that $\Upsilon$ is the state
	description obtained from $w_{\vec{c}}$ by the method discussed above. Then, since the representations of the form \eqref{eq2.37}
	of these functions only differ in the coefficients of the $y_{\vec{e}}$ occurring we obtain the equation
	\begin{gather}
		\begin{pmatrix}
			\vdots\\
			\vec{\p}_{\vec{m}}\nabla^L_{\Upsilon(\vec{c})}\\
			\vdots
		\end{pmatrix}
		= A\cdot
		\begin{pmatrix}
			\vdots\\
			(m_1,\dotsc,m_q)!\, y_{\vec{c}_{\vec{m}}}\\
			\vdots
		\end{pmatrix},\label{eq2.38}
	\end{gather}
	where $A$ is the $K\times K$-matrix with entry $\<\vec{m},\vec{n}\>$ being $\prod_{k=1}^q \p_{\vec{m},k}^{n_k}$. It suffices now to show that we can pick
	the $\vec{\p}_{\vec{m}}$ such that $A$ is regular. For suppose this is the case. Then we obtain from \eqref{eq2.38} the equation
	\begin{gather}
		A^{-1}
		\begin{pmatrix}
			\vdots\\
			\vec{\p}_{\vec{m}}\nabla^L_{\Upsilon(\vec{c})}\\
			\vdots
		\end{pmatrix}
		=
		\begin{pmatrix}
			\vdots\\
			(m_1,\dotsc,m_q)!\,  y_{\vec{c}_{\vec{m}}}\\
			\vdots
		\end{pmatrix}.\label{eq2.39}
	\end{gather}
	Suppose $A^{-1} = (b_{\vec{n},\vec{m}})_{\vec{n},\vec{m}\in K}$. Then for $\vec{n}=\<1,1,\dotsc,1\>$ we obtain
	\begin{gather*}
		y_{\vec{c}} = \frac{1}{(n_1,\dotsc,n_q)!} \sum_{\vec{m}\in K} b_{\vec{n},\vec{m}}\vec{\p}_{\vec{m}}\nabla^L_{\Upsilon(\vec{c})} =
		\frac{1}{q!} \sum_{\vec{m}\in K} b_{\vec{n},\vec{m}}\vec{\p}_{\vec{m}}\nabla^L_{\Upsilon(\vec{c})},
	\end{gather*}
	and by collecting the functions with positive coefficients in the linear combination on the right-hand side, we obtain constants
	$\gamma, \lambda\geq 0$, {\it independent of } $\vec{c}$, such that\footnote{Note that we can safely assume $\lambda\neq 0$, since if $\lambda=0$,
		then the $y_{\vec{c}}$ in question would already satisfy ULi, and therefore already has the desired representation by the Representation Theorem for ULi.
		We also trivially have $\gamma\neq 0$, since $y_{\vec{c}}$ is a probability function for any $\vec{c}\in\DD{q}$.}
	\begin{gather*}
		\frac{1}{q!} \sum_{\vec{m}\in K} b_{k,\vec{m}}\vec{\p}_{\vec{m}}\nabla^L_{\Upsilon(\vec{c})} = \gamma w_1 - \lambda w_2,
	\end{gather*}
	with $w_1$, $w_2$ convex combinations of ULi functions. Since this gives the probability function $y_{\vec{c}}$, we must have
	\begin{gather*}
		1 = y_{\vec{c}}(\top) = \gamma w_1(\top) - \lambda w_2(\top) = \gamma - \lambda,
	\end{gather*}
	and thus $\gamma = 1 + \lambda$.
	
	It remains to show that the $\vec{\p}_{\vec{m}}$ can be chosen such that $A$ is regular. For this, we will show the following by induction on $j$:\\
	Let $1\leq i_1 < i_2 < \dotsb < i_j \leq r$ and let $A_{\<i_1,\dotsc,i_j\>}$ be the $j\times j$ sub-matrix of $A$
	obtained by taking the $i_1,\dotsc,i_j$'th rows and columns of $A$. Then there is a choice of the $\vec{\p}_{\vec{m}_k}$, $k=i_1,\dotsc,i_j$ such that
	$A_{\<i_1,\dotsc,i_j\>}$ is regular.
	
	For $j=1$, this is trivial. Suppose $j=n+1$ for some $n\geq 1$ and consider $A_{\<i_1,\dotsc,i_j\>}$.
	For a given $\vec{m}\in K$, the polynomial $\prod_{j=1}^q x_j^{m_j}$ takes its maximum value on $\DD{q}$ at $x_j = m_j/q$. Fix an enumeration of $K$. There exists
	$\vec{m}_{i_k} = \<m_{i_k,1},\dotsc,m_{i_k,q}\>$ such that
	\begin{gather*}
		\prod_{s=1}^q \left(\frac{m_{i_k,s}}{q}\right)^{m_{i_k,s}} > \prod_{s=1}^q \left(\frac{m_{i_k,s}}{q}\right)^{m_{i_j,s}}
	\end{gather*}
	for all $j\neq k$. For if not, then
	\begin{gather*}
		\prod_{s=1}^q \left(\frac{m_{i_k,s}}{q}\right)^{m_{i_k,s}} \leq  \prod_{s=1}^q \left(\frac{m_{i_k,s}}{q}\right)^{m_{i_j,s}} < \prod_{s=1}^q
			\left(\frac{m_{i_j,s}}{q}\right)^{m_{i_j,s}}
	\end{gather*}
	for some $j\neq k$, and continuing in this way we arrive at a contradiction.
	
	By the inductive hypothesis, there exists a choice of the $\vec{\p}_{\vec{m}_s}$, $s\in\{i_1,\dotsc,i_j\}\setminus\{i_k\}$ such that the sub-matrix
	$A_{\<i_1,\dotsc,i_{k-1},i_{k+1},\dotsc,i_j\>}$ is regular. Thinking of the $\p_{\vec{m}_{i_k},s}$ for the moment as unknowns we obtain for the determinant of
	$A_{\<i_1,\dotsc,i_j\>}$ an expression of the form
	\begin{multline}
		\det(A_{\<i_1,\dotsc,i_j\>}) =\\ \pm \prod_{s=1}^q \p_{\vec{m}_{i_k},s}^{m_{i_k,s}}\cdot\det(A_{\<i_1,\dotsc,i_{k-1},i_{k+1},\dotsc,i_j\>})
			+ \sum_{t\in\{i_1,\dotsc,i_j\}\setminus\{i_k\}}\prod_{s=1}^q \p_{\vec{m}_{i_k},s}^{m_{t,s}}\cdot\left(\pm\det(A_t)\right),\label{eq2.40}
	\end{multline}
	(for some choices of $\pm$) where the $A_t$ are the corresponding sub-matrices of $A_{\<i_1,\dotsc,i_j\>}$. Now picking $\p_{\vec{m}_{i_k,s}} = (m_{i_k,s}/q)^g$ for large 
	enough $g>0$, the term
	\begin{gather*}
		\prod_{s=1}^q \p_{\vec{m}_{i_k},s}^{m_{i_k,s}}\cdot\det(A_{\<i_1,\dotsc,i_{k-1},i_{k+1},\dotsc,i_j\>})
	\end{gather*}
	becomes the dominant term of \eqref{eq2.40}, giving that $\det(A_{\<i_1,\dotsc,i_j\>})\neq 0$, as certainly\\
	$\prod_{s=1}^q \p_{\vec{m}_{i_k,s}}^{n_{i_k,s}}> 0$ and $\det(A_{\<i_1,\dotsc,i_{k-1},i_{k+1},\dotsc,i_j\>})\neq 0$ by the inductive hypothesis.

	Note that using this procedure we in general obtain $\vec{\p}_{\vec{m}}$ with entries $\p_{\vec{m}_i,j}$ not summing to $1$. In that case, we can pick $\vec{\p'}_{\vec{m}}$
	such that
	\begin{gather*}
		\p_{\vec{m}_i,j}' = \frac{\p_{\vec{m}_i,j}}{\sum_{s=1}^q \p_{\vec{m}_i,s}}
	\end{gather*}
	for each $\vec{m}\in K$. Then the matrix $A'$ with entries $\prod_{s=1}^q {\p'}_{\vec{m}_i,s}^{n_j,s}$ is regular just if $A$ is, and
	the $\vec{\p'}_{\vec{m}}$ have the desired properties.
\end{proof}

Using this lemma, we can now prove the desired theorem.

\begin{thm}[General Representation Theorem for $w$ satisfying Px]
	Let $w$\\ be a probability function on $\SL$ satisfying Px. Then there exist $\lambda \geq 0$ and probability functions $w_1$, $w_2$ satisfying ULi
	such that
	\begin{gather*}
		w = (1+\lambda) w_1 - \lambda w_2.
	\end{gather*}
\end{thm}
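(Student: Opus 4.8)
The plan is to combine the de~Finetti-style representation for $\Px$-functions (Theorem with \eqref{eq2.35}) with the per-building-block decomposition of Lemma~\ref{lem2.s}, and then argue that the decomposition can be made uniform in $\vec{c}$ so that it survives integration against $\mu$. First I would invoke the theorem preceding Lemma~\ref{lem2.s}: since $w$ satisfies $\Px$, there is a normalized $\sigma$-additive measure $\mu$ on the Borel subsets of $\DD{q}$ with
\begin{gather*}
	w = \int_{\DD{q}} y_{\vec{c}}\,\,d\mu(\vec{c}).
\end{gather*}
By Lemma~\ref{lem2.s}, each $y_{\vec{c}}$ equals $(1+\lambda)w_1^{\vec{c}} - \lambda w_2^{\vec{c}}$ for suitable ULi probability functions $w_1^{\vec{c}}$, $w_2^{\vec{c}}$, and crucially the constant $\lambda$ produced there is \emph{independent of} $\vec{c}$ (this is flagged explicitly in the proof of the lemma: the $\vec{\p}_{\vec{m}}$, hence the matrix $A$ and its inverse $A^{-1}$, hence the collected positive/negative coefficients $\gamma = 1+\lambda$ and $\lambda$, depend only on $q$, not on $\vec{c}$). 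So the only $\vec{c}$-dependence sits in the functions $w_1^{\vec{c}}$, $w_2^{\vec{c}}$ themselves.

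Next I would define $w_1 := \int_{\DD{q}} w_1^{\vec{c}}\,d\mu(\vec{c})$ and $w_2 := \int_{\DD{q}} w_2^{\vec{c}}\,d\mu(\vec{c})$. These are genuine probability functions (convex mixtures, with $\mu$ normalized), and since each $w_i^{\vec{c}}$ satisfies ULi and ULi is preserved under convex mixtures — extend the ULi families pointwise and integrate, exactly as in the easy direction of Theorem~\ref{RepThm} — both $w_1$ and $w_2$ satisfy ULi. Then, pulling the (constant) scalars through the integral,
\begin{gather*}
	(1+\lambda) w_1 - \lambda w_2 = \int_{\DD{q}} \bigl((1+\lambda)w_1^{\vec{c}} - \lambda w_2^{\vec{c}}\bigr)\,d\mu(\vec{c}) = \int_{\DD{q}} y_{\vec{c}}\,d\mu(\vec{c}) = w,
\end{gather*}
which is the claimed representation.

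The main obstacle is making Lemma~\ref{lem2.s} genuinely uniform, i.e. ensuring not merely that $\lambda$ is the same for all $\vec{c}$ but that the maps $\vec{c}\mapsto w_1^{\vec{c}}$ and $\vec{c}\mapsto w_2^{\vec{c}}$ are measurable, so that the integrals defining $w_1$ and $w_2$ make sense. This should follow because, in the lemma's construction, $w_i^{\vec{c}}$ is a fixed rational (or real) linear combination — with coefficients coming from the fixed matrix $A^{-1}$ — of the functions $\vec{\p}_{\vec{m}}\nabla^L_{\Upsilon(\vec{c})}$, and for fixed $\vec{\p}_{\vec{m}}$ the value $\vec{\p}_{\vec{m}}\nabla^L_{\Upsilon(\vec{c})}(\Theta)$ depends polynomially (via the floor-function construction of $\Upsilon(\vec{c})$, then a standard-part passage) on the coordinates of $\vec{c}$; in particular it is a Borel function of $\vec{c}$ for each fixed state description $\Theta$, which is all that is needed since probability functions are determined by their values on state descriptions. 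One should also double-check that the normalization step at the end of the proof of Lemma~\ref{lem2.s} (replacing $\vec{\p}_{\vec{m}}$ by $\vec{\p'}_{\vec{m}}$) does not reintroduce $\vec{c}$-dependence into $\lambda$ — it does not, since it only rescales the weight vectors, not the combinatorial data $K$, $A$, $A^{-1}$. With measurability in hand, the argument above goes through verbatim.
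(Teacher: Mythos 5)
Your proposal is correct and follows essentially the same route as the paper: represent $w$ as $\int_{\DD{q}} y_{\vec{c}}\,d\mu(\vec{c})$ via the Px version of de Finetti's theorem, apply the lemma with the $\vec{c}$-independent $\lambda$, and integrate the two pieces to get $w_1$ and $w_2$. Your additional remarks on measurability of $\vec{c}\mapsto w_i^{\vec{c}}$ and on ULi being preserved under mixtures merely make explicit details the paper leaves implicit.
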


\begin{proof}
	Let $w$ be a probability function on $\SL$ satisfying Px. By the Representation Theorem for Px, we have that $w$ has a representation
	\begin{gather}
		w = \int_{\DD{q}} y_{\vec{c}}\,\,d\mu(\vec{c})\label{eq2.50}
	\end{gather}
	for some measure $\mu$, and by \thref{lem2.s}, we have, for a fixed $\lambda \geq 0$, a representation
	\begin{gather*}
		y_{\vec{c}} = (1 + \lambda) w_{1_{\vec{c}}} - \lambda w_{2_{\vec{c}}}
	\end{gather*}
	for each $\vec{c}\in\DD{q}$. Now applying this to the representation \eqref{eq2.50}, we obtain
	\begin{align*}
		w &= \int_{\DD{q}} (1 + \lambda) w_{1_{\vec{c}}} - \lambda w_{2_{\vec{c}}}\,d\mu(\vec{c})\\
			&= \int_{\DD{q}} (1 + \lambda) w_{1_{\vec{c}}}\,d\mu(\vec{c}) - \int_{\DD{q}}\lambda w_{2_{\vec{c}}}\,d\mu(\vec{c}) \\
&=(1+\lambda)w_{1}  - \lambda w_{2},
	\end{align*}
for
$$ w_1 = \int_{\DD{q}} w_{1_{\vec{c}}}\,d\mu(\vec{c}), ~~~~ w_2 = \int_{\DD{q}} w_{2_{\vec{c}}}\,d\mu(\vec{c}),$$
as required.
\end{proof}

\section{Conclusion}

With \thref{RepThm}, we have shown that the building blocks for probability functions satisfying Unary Language Invariance all satisfy Weak Irrelevance, and
that in fact these are the only ones that satisfy this principle. This is analogous to the situation with Atom Exchangeability, Ax, and its generalization to Polyadic Pure Inductive Logic, Spectrum
Exchangeability, see \cite{ParisVencovskaBook}.
This analogy also extends to the General Representation Theorem, stating that each probability function satisfying Px is a scaled difference of probability
functions satisfying ULi (see \cite{Heid}). 

Throughout this paper we have worked in the conventional {\itshape Unary} Pure Inductive Logic. Recently however there has been a rapid development of
{\itshape Polyadic} Pure Inductive Logic (again see \cite{ParisVencovskaBook}) and we anticipate that the Representation Theorem  for ULi functions can be
extended to the polyadic case, using the same methods as demonstrated above. A classification for probability functions on polyadic languages satisfying
Language Invariance would give rise to the question whether we can find a corresponding General Representation Theorem for the polyadic case as well.

\end{document}